\newcommand{\Extend}[5]{\ext@arrow0099{\arrowfill@#1#2#3}{#4}{#5}}
\let\pa=\partial
\def\C{\mathop{\bf C\kern 0pt}\nolimits}
\def\DD{\mathop{\bf D\kern 0pt}\nolimits}
\def\K{\mathop{\bf K\kern 0pt}\nolimits}
\def\N{\mathop{\bf N\kern 0pt}\nolimits}
\def\Q{\mathop{\bf Q\kern 0pt}\nolimits}
\newcommand{\bag}{\begin{align}}
\newcommand{\eag}{\end{align}}
\newcommand{\beq}{\begin{equation}}
\newcommand{\eeq}{\end{equation}}
\newcommand{\ben}{\begin{eqnarray}}
\newcommand{\een}{\end{eqnarray}}
\newcommand{\beno}{\begin{eqnarray*}}
\newcommand{\eeno}{\end{eqnarray*}}
\def\R{\mathop{\mathbb R\kern 0pt}\nolimits}
\newtheorem{proposition}{Proposition}[section]
\newtheorem{lemma}{Lemma}[section]
\newtheorem{theorem}{Theorem}[section]
\newtheorem{remark}{Remark}[section]
\theoremstyle{remark}
\theoremstyle{proof}
\begin{document}
 \title[New proof for Radial NLS with Combined Terms]{ A new proof of scattering theory for the 3D radial NLS with combined terms }

\author{Chengbin Xu}%
\address{School of Mathematics and Statistics,
 Zhengzhou University, 
 100 Kexue Road, Zhengzhou, Henan, 450001, China
}%
\email{xcbsph@163.com}%

\author{Tengfei Zhao}%
\address{Beijing Computational Science Research Center,
Building 9, East Zone, ZPark II, No.10
 Xibeiwang East Road, Haidian District, Beijing, China }
\email{zhao\underline{ }tengfei@csrc.ac.cn}%
\maketitle

\date{}

\begin{abstract}
In this paper, we give a simple proof of scattering result for the Schr\"odinger equation with combined term
$i\pa_tu+\Delta u=|u|^2u-|u|^4u$ in dimension three, that avoids the concentrate compactness method. The main new ingredient is to extend the scattering criterion to energy-critical.
\end{abstract}

\section{Introduction}

\noindent

In this paper, we consider the Cauchy problem for the nonlinear Schr\"odinger equation of the form
\begin{equation}\label{Ct}
 \left\{ \begin{aligned}
    i\pa_tu+\Delta u&=|u|^2u-|u|^4u=F(u),\quad (t,x)\in\R\times\R^3 \\
    u(0,x)&=u_0(x)\in H^1(\R^3),
  \end{aligned}\right.
\end{equation}
where $u:\R\times\R^3\to\mathbb{C}$. By standard scaling arguments, $|u|^4u$ has the $\dot{H}^1$-critical growth and $|u|^2u$ has the $\dot{H}^\frac12$-critical growth.
Solution to the Cauchy problem \eqref{Ct} conserves the mass, defined by
\begin{align*}
  M(u(t))=\int_{\R^3}|u|^2(t,x) dx=M(u_0)
\end{align*}
and the energy, defined as the sum of the kinetic and potential energies:
\begin{equation}\label{equ:energy}
  E(u(t))=\int_{\R^3}\left[\tfrac12|\nabla u|^2+\tfrac14|u|^4-\tfrac16|u|^6\right](t,x)\;dx=E(u_0).
\end{equation}
We also define the modified energy  for later use
\begin{equation}\label{equ:wene}
  E^c(u):=\frac12\int_{\R^3}|\nabla u|^2\;dx-\frac16\int_{\R^3}|u|^6\;dx.
\end{equation}

Based on the Strichartz estimates of the linear Schr\"odinger
operator $e^{it\Delta}$, one can obtain the local well-posedness
of the solution the Cauchy problem \eqref{Ct} via a standard way,
see Cazenave\cite{Caz} for example.
For the defocusing energy-critical case($F(u)=|u|^4u$),
Bourgain proved the solution with radial initial data in $\dot H^1(\R^3)$
is global well-posed and scattering by developing the induction-on-energy strategy.
The radial assumption
 was removed by Colliander, Keel, Staffilani, Takaoka, and
Tao in \cite{CKSTT-2008-Annals}.
Zhang \cite{Zhang} showed the global well-posedness, scattering
and blow up phenomena for the $3$D quintic nonlinear Schr\"odinger
equation perturbed by a energy-subcritical
 nonlinearity $\lambda_1|u|^pu$.
For the defocusing case of \eqref{Ct}($F(u)=|u|^4u+|u|^2u$),  in \cite{Tao2}, Tao, Visan and Zhang made a comprehensive study of
in the energy space by using of the interaction Morawetz estimates established in \cite{CK} and stability theory for the scattering.

Removing the perturbation term $|u|^2u$, we have the focusing energy-critical nonlinear Schr\"odinger equation
\begin{equation}\label{equ:nlserergy}
  \left\{\begin{aligned}
    i\pa_tu+\Delta u&=-|u|^4u,\quad (t,x)\in\R\times\R^3, \\
    u(0,x)&=u_0(x)\in \dot{H}^1(\R^3).
  \end{aligned}   \right.
\end{equation}
As well known, the corresponding nonlinear elliptic equation
 $ -\Delta \varphi=|\varphi|^4\varphi,\quad x\in\R^3 $
has a unique radial positive solution, the ground state, $W(x)=(1+\tfrac13|x|^2)^{-\frac12}$.
In \cite{KM}, Kenig and Merle first  proved the
Radial scattering/blowup dichotomy for solutions below the ground state $W$.
They first applied the concentration compactness to
induction on energy based on profile decomposition of
\cite{K1,K2} to the scattering theory.
%
%
Their main results are followings:
\begin{theorem}[Radial scattering/blowup dichotomy]\label{thm:km}
Let $u_0\in\dot{H}^1(\R^3)$ be radial and such that
\begin{equation}\label{ect1}
  E^c(u_0)<E^c(W).
\end{equation}
Then,

$(1)$, If $\|u_0\|_{\dot{H}^1}  <  \|W\|_{\dot{H}^1}$, then, the solution $u$ to \eqref{equ:nlserergy} is global and scatters in the sense that there exists $u_\pm\in\dot{H}^1$ such that
      \begin{equation}\label{equ:sca}
        \lim_{t\to\pm\infty}\|u(t,\cdot)-e^{it\Delta}u_\pm\|_{\dot{H}^1}=0.
      \end{equation}

$(2)$, If  $\|u_0\|_{\dot{H}^1}>\|W\|_{\dot{H}^1}$, then, the solution $u$ to \eqref{equ:nlserergy} blows up in finite time in both directions.

\end{theorem}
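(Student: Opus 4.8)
The plan is to follow the concentration–compactness and rigidity framework, splitting the analysis according to the sign of $\|u(t)\|_{\dot H^1}-\|W\|_{\dot H^1}$, which the energy constraint \eqref{ect1} freezes along the flow. The first step is variational: using the sharp Sobolev inequality $\int_{\R^3}|u|^6\,dx\le C_*^6\|\nabla u\|_{L^2}^6$, whose optimizer is exactly the ground state $W$ (so that $\|W\|_{\dot H^1}^2=\int_{\R^3}|W|^6\,dx=3E^c(W)$), I would bound $E^c(u)\ge g(\|\nabla u\|_{L^2}^2)$ with $g(y)=\tfrac12 y-\tfrac16 C_*^6 y^3$. Since $g$ increases to its unique maximum $g(\|W\|_{\dot H^1}^2)=E^c(W)$ and then decreases, the strict inequality \eqref{ect1}, conservation of $E^c$, and continuity of $t\mapsto\|\nabla u(t)\|_{L^2}^2$ confine this quantity to one side of $\|W\|_{\dot H^1}^2$ throughout the interval of existence. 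Thus in case $(1)$ one obtains a trapped bound $\|\nabla u(t)\|_{L^2}^2\le(1-\delta)\|W\|_{\dot H^1}^2$ together with the coercivity $\int_{\R^3}|u|^6\,dx\le(1-\delta)\|\nabla u\|_{L^2}^2$, while in case $(2)$ one has $\|\nabla u(t)\|_{L^2}^2\ge(1+\delta)\|W\|_{\dot H^1}^2$, each uniform in $t$.

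For the scattering assertion $(1)$, the trapping yields a uniform a priori bound on the kinetic energy, so it remains only to upgrade this to finiteness of a global Strichartz (scattering) norm. I would first record the small–data theory: there is $\eta_0>0$ such that $\|u_0\|_{\dot H^1}\le\eta_0$ forces global existence and scattering. Then, assuming scattering fails somewhere below $E^c(W)$, I would extract a critical element — a minimal-energy radial solution $u_c$ that does not scatter, whose orbit $\{u_c(t)\}$ is precompact in $\dot H^1$ modulo the scaling symmetry; here the radial hypothesis is precisely what removes spatial translations and keeps the modulation finite-dimensional. The existence of $u_c$ follows from linear and nonlinear profile decompositions combined with the stability theory for \eqref{equ:nlserergy}. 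The concluding rigidity step is to show that a finite-energy solution with such a precompact orbit must be identically zero, contradicting its non-scattering; I would run this via a localized virial/Morawetz identity adapted to the compactness scale, using the coercivity from the first step to force a strictly signed second derivative.

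For the blowup assertion $(2)$ the argument is the convexity method. Writing $V(t)=\int_{\R^3}|x|^2|u(t,x)|^2\,dx$, the virial identity for \eqref{equ:nlserergy} reads
\[
  V''(t)=8\|\nabla u(t)\|_{L^2}^2-8\int_{\R^3}|u(t,x)|^6\,dx=48\,E^c(u_0)-16\|\nabla u(t)\|_{L^2}^2 .
\]
Invoking the trapped bound $\|\nabla u(t)\|_{L^2}^2\ge(1+\delta)\|W\|_{\dot H^1}^2$, the identity $\|W\|_{\dot H^1}^2=3E^c(W)$, and \eqref{ect1} gives $V''(t)\le 48\,E^c(W)-16(1+\delta)\|W\|_{\dot H^1}^2\le-48\,\delta\,E^c(W)<0$ uniformly in $t$. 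A nonnegative function with uniformly negative second derivative cannot persist on a half-line, so the maximal interval of existence must be finite in both directions. For radial data lying only in $\dot H^1$ (where $V$ may be infinite), I would replace $|x|^2$ by a smooth compactly supported weight and absorb the resulting error terms using radial decay together with the uniform kinetic bound.

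The main obstacle is the construction and rigidity of the critical element in $(1)$: the profile decomposition must be executed in the scaling-critical space $\dot H^1$, the perturbation/stability lemma must be robust enough to pass nonlinear profiles to the limit, and the terminal localized virial estimate must be closed quantitatively using only the coercivity gap $\delta$. By contrast, the variational trapping and the convexity blowup are comparatively routine. I note that the alternative advertised in this paper — replacing the construction of the critical element by an energy-critical scattering criterion — is designed precisely to bypass this obstacle.
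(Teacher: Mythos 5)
Your outline is essentially the Kenig--Merle concentration--compactness/rigidity argument (variational trapping via the sharp Sobolev constant of $W$, extraction of a precompact critical element plus a localized virial rigidity step for scattering, and the convexity/localized virial argument for blowup), and the computations you record, e.g. $\|W\|_{\dot H^1}^2=3E^c(W)$ and $V''=48E^c(u_0)-16\|\nabla u\|_{L^2}^2$, are correct. The paper gives no proof of this theorem at all --- it is quoted directly from \cite{KM} --- so your proposal coincides with the proof the paper implicitly relies on, and no discrepancy arises.
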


Next, we recall the scattering and blow-up result of \eqref{Ct},
which established by Miao-Xu-Zhao in \cite{MXZ}.
We define some quantities and some variation results(refers to
\cite{SMN-2011-Apde}, \cite{MXZ},\cite{MZZ} for details). For $\varphi\in H^1$, let
$$K(\varphi)~=~ \left.\frac{d}{d\lambda}\right|_{\lambda=0} E(\varphi^\lambda)~ =~2\int_{\R^3}|\nabla \varphi|^2-|\varphi|^6dx+\frac32\int_{\R^3}|\varphi|^4dx,$$
where $\varphi^\lambda(x)=e^{3\lambda} \varphi(e^{2\lambda} x).$
Based on this quantity, we denote the following energy spaces 
below the ground state:
\begin{align*}
\K^+&=\{\varphi \in H^1| \varphi\  is\  radial,\ E(\varphi)<E^c(W),\ K(\varphi)\geq0 \},\\
\K^-&=\{\varphi \in H^1| \varphi\  is\  radial,\ E(\varphi)<E^c(W),\ K(\varphi)<0 \},\\
\bar{\K}^+&=\{\varphi \in H^1|\varphi\  is\  radial,\ E(\varphi)<E^c(W),\ \|\nabla\varphi\|_2^2\leq \|\nabla W\|_2^2\},\\
\bar{\K}^-&=\{\varphi \in H^1|\varphi\  is\  radial,\ E(\varphi)<E^c(W),\ \|\nabla\varphi\|_2^2 > \|\nabla W\|_2^2\}.
\end{align*}
In fact, from similar arguments of  \cite{KM}, \cite{SMN-2011-Apde},\cite{MZZ}, we have $\K^+=~\bar{\K}^+$ and $\K^-=~\bar{\K}^-$and we give the proof in the appendix for completion.
Then, the main results of Miao-Xu-Zhao \cite{MXZ} can be stated as:
\begin{theorem}\label{recall}
  Let $u_0\in H^1(\R^3)$ and $u$ be the solution of \eqref{Ct} and $I_{max}$ be its maximal interval of existence. Then

  $(a)$ If $u_0\in\K^+$, Then $I_{max}=\R$, and $u$ scatters in both time directions in $H^1$;

  $(b)$ If $u_0\in\K^-$, Then $u$ blows up both forward and backward at finite time in $H^1$.
\end{theorem}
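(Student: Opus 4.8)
The two assertions are proved separately after a common variational preliminary, and for $(a)$ the guiding idea is the Dodson--Murphy philosophy: replace the concentration-compactness/rigidity machinery by a scattering criterion that is fed a sequence of good times produced by a Morawetz estimate. First I would set up the energy-trapping. Using conservation of $M$ and $E$, the identity $E^c(u(t))=E(u(t))-\tfrac14\|u(t)\|_4^4\le E(u(t))<E^c(W)$, and the equivalences $\K^+=\bar{\K}^+$, $\K^-=\bar{\K}^-$ from the appendix, the quantity $\|\nabla u(t)\|_2^2$ stays on one side of the threshold $\|\nabla W\|_2^2$: since $E^c(u(t))<E^c(W)$ throughout and $t\mapsto\|\nabla u(t)\|_2^2$ is continuous, a datum in $\bar{\K}^+$ (resp.\ $\bar{\K}^-$) keeps $\|\nabla u(t)\|_2^2<\|\nabla W\|_2^2$ (resp.\ $>$), so both sets are invariant under \eqref{Ct}. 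On $\K^+$ the sharp Sobolev inequality and the trapping give $\sup_{t}\|u(t)\|_{H^1}\le C$ together with a coercive lower bound on $K(u(t))$, whence $I_{max}=\R$; on $\K^-$ the same analysis yields $K(u(t))\le-\delta<0$ uniformly.

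For $(a)$ the central and genuinely new step is a scattering criterion valid up to the energy-critical scaling: there exist $\varepsilon,R>0$, depending only on $\sup_t\|u(t)\|_{H^1}$, such that if $\liminf_{t\to+\infty}\int_{|x|\le R}|u(t,x)|^2\,dx\le\varepsilon^2$ then $u$ scatters forward in $H^1$, and symmetrically as $t\to-\infty$. I would prove it via the Duhamel formula and Strichartz estimates, splitting $F(u)=|u|^2u-|u|^4u$: the cubic term is $\dot{H}^{1/2}$-critical and is treated as in the intercritical Dodson--Murphy argument, while the focusing quintic is energy-critical and must be estimated in a critical Strichartz norm such as $L^{10}_{t,x}$. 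The variational trapping is decisive here---staying strictly below $W$ keeps the critical Sobolev quotient away from its extremizer---and, combined with critical-regularity stability/perturbation theory, it lets the quintic contribution be absorbed once the free evolution emanating from a good time is shown to have small critical norm near the origin.

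The good times themselves come from a truncated virial/Morawetz estimate with a weight comparable to $|x|$ on $\{|x|\le R\}$: the defocusing cubic gives a term of favourable sign, and the focusing quintic, though of unfavourable sign, is dominated via the coercivity of $K$ below the ground state, producing an averaged bound that forces $\tfrac1T\int_T^{2T}\int_{|x|\le R}|u(t,x)|^2\,dx\,dt\to0$ along some sequence; these are precisely the times the criterion needs, completing $(a)$. For $(b)$ I would use the virial identity $\tfrac{d^2}{dt^2}\int|x|^2|u|^2\,dx=4K(u(t))$, which on $\K^-$ is $\le-4\delta<0$; for radial $H^1$ data of possibly infinite variance I would run it with a smooth spatial cutoff and control the errors through the radial Sobolev embedding and the uniform $H^1$ bound. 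The ensuing concavity drives the (truncated) variance to zero in finite time, contradicting a globally defined finite-energy solution, hence finite-time blow-up in both directions.

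The step I expect to be the main obstacle is the energy-critical scattering criterion, specifically the control of the focusing quintic in a critical Strichartz norm: unlike the purely intercritical problem there is no subcritical slack from which to extract smallness merely by localizing in time, so one is forced to lean on the trapping below $W$ and on critical stability theory to tame the focusing critical nonlinearity. Secondary difficulties are the unfavourable sign of the quintic in the Morawetz estimate, again neutralized by the coercivity of $K$, and the truncation errors in both the Morawetz and the virial identities for infinite-variance radial data.
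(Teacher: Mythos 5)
Your overall architecture (energy trapping, a radial scattering criterion fed by a virial/Morawetz estimate, truncated virial for blow-up) matches the paper's, but two of your steps have genuine gaps, and you have in fact flagged the second one yourself as ``the main obstacle'' without resolving it. First, the inference ``$\sup_t\|u(t)\|_{H^1}\le C$ \ldots whence $I_{max}=\R$'' is not valid here: the quintic term is $\dot H^1$-critical, so the local existence time depends on the profile of the data and not merely on its $H^1$ norm, and a uniform kinetic-energy bound does not by itself preclude finite-time blow-up. The paper closes this by a Zhang-type perturbation argument: it takes the global Kenig--Merle solution $v$ of the quintic equation \eqref{equ:nlserergy} with the same data (whose global spacetime bounds come from Theorem \ref{ect1}), solves the difference equation \eqref{xc1} for $w=u-v$ on an interval $[0,T]$ with $T=T(\delta_0,\|u_0\|_2)$ by induction over subintervals on which $\|v\|_{X^1_{I_j}}\sim\eta$, and only then combines this ``good local well-posedness'' with the trapping of Lemma \ref{gl} to iterate globally. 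Some such step is indispensable.

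Second, your scattering criterion is stated with the localized mass $\int_{|x|\le R}|u|^2$, as in Tao and Dodson--Murphy for intercritical problems. That hypothesis is too weak to control the quintic Duhamel contribution over the recent past $I_2=[T_0-\epsilon^{-\theta},T_0]$: the natural estimate requires smallness of $\|u\|_{L^\infty_tL^6_x(I_2\times\R^3)}$, and localized $L^2$ smallness does not yield localized $L^6$ smallness for radial $H^1$ functions (radial Sobolev embedding gives no pointwise control near the origin, and $\|u\|_6\lesssim\|\nabla u\|_2$ carries no mass factor). The paper's actual new ingredient is to restate the criterion with $\liminf_{t\to\infty}\int_{|x|\le R}|u|^6\,dx\le\epsilon^6$; this quantity is almost conserved on $I_2$ because $\bigl|\partial_t\int\chi_R|u|^6\,dx\bigr|\lesssim R^{-1}\|u(t)\|_{W^{1,6}}$, so smallness at one time propagates over $I_2$ and feeds directly into the estimate of $F_2$, while the Morawetz/virial estimate (whose coercive main term is exactly $\|\nabla(\chi_Ru)\|_2^2-\|\chi_Ru\|_6^6\gtrsim\|\chi_Ru\|_6^6$ by the trapping on balls, Lemma \ref{Cor}) naturally outputs decay of the same $L^6$ quantity. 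Appealing to ``critical stability theory'' does not substitute for this: stability theory compares $u$ to an approximate solution, whereas what is needed is smallness of $\|e^{i(t-T_0)\Delta}u(T_0)\|_{L^{10}_{t,x}([T_0,\infty)\times\R^3)}$, which is precisely what the $L^6$ formulation delivers. (A minor point: part $(b)$ is not reproved in the paper --- it is quoted from Miao--Xu--Zhao --- so your truncated-virial sketch is extra work, with the usual caveats for infinite-variance radial data.)
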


Unlike the energy-critical equation \eqref{equ:nlserergy},
equation \eqref{Ct} is lack of scaling symmetry.
Miao-Xu-Zhao \cite{MXZ}
conquered this difficulty and prove the
Theorem \ref{recall} by developing a new radial
profile decomposition and the concentration
compactness.




In this article,  we  give a simplified proof of the scattering theory in Theorem
\ref{recall} by employing  the new method of Dodson-Murphy \cite{BM}.
Based on Theorem \ref{thm:km}, we use the perturbation argument of \cite{Zhang} to prove a ``good local well-posedness''
of the solution $u$  to the Cauchy problem of\eqref{Ct}
with initial data $u_0$ in $\K^+$.
Then we apply the coercivity property of $u$ to prove the global well-posedness.
Next for the scattering theory, we establish a new scattering-criterion for
the equation \eqref{Ct}, which says the local energy-critical potential
energy evacuation means scattering.
Finally, we show the solutions such the potential energy evacuation
via the Virial/Morawetz estimates.



%
%
%
%

 \begin{remark}
 Our arguments may not suit for the nonradial case, since
it is based on Theorem \eqref{thm:km},
 which is open up to now in the nonradial case.
\end{remark}

%
%
%
%

The rest of this paper is organized as follows: In section 2, we set up some notation, recall some important linear theory. In section 3, combining ``global local well-posedness'' with kinetic energy control, Lemma \ref{gl}, we can get global well-posedness. In section 4, we establish a new scattering criterion for \eqref{Ct}, Lemma \ref{SC}. In section 5, by the Morawetz identity, we will establish the virial/Morawetz estimates to show the solution satisfy the scattering criterion of Lemma \ref{SC}, thereby completing the proof of Theorem \ref{recall}.

We conclude the introduction by giving some notations which
will be used throughout this paper. We always use $X\lesssim Y$ to denote $X\leq CY$ for some constant $C>0$.
Similarly, $X\lesssim_{u} Y$ indicates there exists a constant $C:=C(u)$ depending on $u$ such that $X\leq C(u)Y$.
We also use the big-oh notation $\mathcal{O}$. e.g. $A=\mathcal{O}(B)$ indicates $C_{1}B\leq A\leq C_{2}B$ for some constants $C_{1},C_{2}>0$.
The derivative operator $\nabla$ refers to the spatial  variable only.
We use $L^r(\mathbb{R}^3)$ to denote the Banach space of functions $f:\mathbb{R}^3\rightarrow\mathbb{C}$ whose norm
$$\|f\|_r:=\|f\|_{L^r}=\Big(\int_{\mathbb{R}^3}|f(x)|^r dx\Big)^{\frac1r}$$
is finite, with the usual modifications when $r=\infty$. For any non-negative integer $k$,
we denote by $H^{k,r}(\mathbb{R}^3)$ the Sobolev space defined as the closure of smooth compactly supported functions in the norm $\|f\|_{H^{k,r}}=\sum_{|\alpha|\leq k}\|\frac{\partial^{\alpha}f}{\partial x^{\alpha}}\|_r$, and we denote it by $H^k$ when $r=2$.
For a time slab $I$, we use $L_t^q(I;L_x^r(\mathbb{R}^3))$ to denote the space-time norm
\begin{align*}
  \|f\|_{L_{t}^qL^r_x(I\times \R^3)}=\bigg(\int_{I}\|f(t,x)\|_{L^r_x}^q dt\bigg)^\frac{1}{q}
\end{align*}
with the usual modifications when $q$ or $r$ is infinite, sometimes we use $\|f\|_{L^q(I;L^r)}$ or $\|f\|_{L^qL^r(I\times\mathbb{R}^3)}$ for short.

\section{Preliminaries}

\noindent

We say that a pair of exponents (q,r) is Schr\"odinger $\dot{H}^s$-admissible in dimension three if
\begin{align}\label{Strichartz}
 \frac2q +\frac3r = \frac32-s
\end{align}
and $2\leq q,r\leq \infty$.
For $s\in [0,1]$, let $\Lambda_s$  denote the set of $\dot{H}^s$-admissible pairs.
If $I\times \R^3$ is a space-time slab, we define the $\dot{S}^0(I\times\R^3)$ Strichartz norm by
$$\|u\|_{\dot{S}^0(I\times\R^3)}:=\sup_{(q,r)\in \Lambda_0}\|u\|_{L_t^qL_x^r},$$
where the $\sup$ is taken over all  $(q,r)\in \Lambda_0$. We define the $\dot{S}^s(I\times\R^3)$ Strichartz norm to be
$$\|u\|_{\dot{S}^s(I\times\R^3)}:=\|D^su\|_{\dot{S}^0(I\times\R^3)}.$$
We also use $\dot{N}^0(I\times\R^3)$ to denote the dual space of $\dot{S}^0(I\times\R^3)$ and
$$\dot{N}^s(I\times\R^3):=\{u: D^s u\in \dot{N}^0(I\times\R^3)\}.$$

In this note, we restrict to radial solutions. The following radial Sobolev embedding plays a crucial role:
\begin{lemma}[Radial Sobolev embedding]\label{lem2.1}
For radial $f\in H^1$, then
  \begin{align*}
    \||x|f\|_{L^\infty}\lesssim ||f||_{H^1}.
  \end{align*}

\end{lemma}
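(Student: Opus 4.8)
The plan is to reduce the statement to a one-dimensional computation in the radial variable and then apply the Cauchy--Schwarz inequality. First I would argue by density: since smooth compactly supported functions are dense in $H^1$, it suffices to prove the bound for radial $f$ that are smooth and decay at infinity, so that all boundary contributions at $r=\infty$ vanish; the general case then follows by approximation, provided the constant in the inequality is independent of $f$. Writing $r=|x|$ and abusing notation so that $f(x)=f(r)$, I would record that for radial functions $\|f\|_2^2 = 4\pi\int_0^\infty |f(r)|^2 r^2\,dr$ and $\|\nabla f\|_2^2 = 4\pi\int_0^\infty |f'(r)|^2 r^2\,dr$.

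The heart of the argument is the fundamental theorem of calculus. For fixed $r>0$, since $f(s)\to 0$ as $s\to\infty$,
$$|f(r)|^2 = -\int_r^\infty \frac{d}{ds}|f(s)|^2\,ds = -2\int_r^\infty \RE\big(\bar f(s) f'(s)\big)\,ds.$$
Multiplying by $r^2$ and using $r^2\le s^2$ on the region $s\ge r$,
$$r^2 |f(r)|^2 \le 2\int_r^\infty s^2\,|f(s)|\,|f'(s)|\,ds \le 2\int_0^\infty s^2\,|f(s)|\,|f'(s)|\,ds.$$

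Finally I would apply Cauchy--Schwarz to the last integral, splitting the weight $s^2$ as $s\cdot s$ between the two factors:
$$r^2 |f(r)|^2 \le 2\Big(\int_0^\infty s^2|f(s)|^2\,ds\Big)^{1/2}\Big(\int_0^\infty s^2|f'(s)|^2\,ds\Big)^{1/2} \lesssim \|f\|_2\,\|\nabla f\|_2.$$
Because the right-hand side does not depend on $r$, taking the supremum over $r>0$ and then a square root gives $\||x|f\|_{L^\infty}\lesssim \|f\|_2^{1/2}\|\nabla f\|_2^{1/2}\lesssim \|f\|_{H^1}$, as claimed. The only genuinely delicate point is the justification that the boundary term at infinity vanishes (equivalently, that $s^2|f(s)|^2\to 0$ along a sequence), which is precisely what the initial density reduction to smooth, decaying functions handles; every remaining step is a direct estimate.
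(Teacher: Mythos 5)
Your proof is correct: the reduction to smooth, decaying radial functions by density, the identity $|f(r)|^2=-2\int_r^\infty \RE(\bar f f')\,ds$, the bound $r^2\le s^2$, and Cauchy--Schwarz with the weight split $s^2=s\cdot s$ together give $\||x|f\|_{L^\infty}^2\lesssim \|f\|_2\|\nabla f\|_2\lesssim\|f\|_{H^1}^2$, which is exactly the claim. The paper states this radial (Strauss-type) Sobolev embedding without proof, and your argument is the standard and complete justification of it.
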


\begin{lemma}[Strichartz estimates,\cite{Caz},\cite{ KT},\cite{Tao1}]\label{Stri}
Let I be a compact time interval, $k\in [0,1]$, and let $u:I\times \R^3\rightarrow \mathbb{C}$ be $\dot{S}^k$ solution to the coupled NLS systems
$$iu_t +\Delta u= F$$
for a function $F$. Then for any time $t_0\in I$, we have
$$ \|u\|_{\dot{S}^k(I\times\R^3)}\lesssim \|D^ku(t_0)\|_{L_x^2}+\|F\|_{\dot{N}^k(I\times\R^3)}.$$
\end{lemma}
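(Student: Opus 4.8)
The plan is to reduce everything to the case $k=0$ and then run the classical Strichartz machinery (dispersive estimate, $TT^*$, Christ--Kiselev, and the Keel--Tao endpoint argument). Since $D^k=(-\Delta)^{k/2}$ is a Fourier multiplier in the space variable, it commutes with both $\partial_t$ and $\Delta$; applying it to $iu_t+\Delta u=F$ gives $i(D^ku)_t+\Delta(D^ku)=D^kF$. By the defining identities $\|u\|_{\dot{S}^k}=\|D^ku\|_{\dot{S}^0}$ and $\|F\|_{\dot{N}^k}=\|D^kF\|_{\dot{N}^0}$, it suffices to prove the estimate for $k=0$. Writing $v=D^ku$ and $G=D^kF$, the reduced problem is $iv_t+\Delta v=G$ on $I$ with datum $v(t_0)$, and the goal becomes $\|v\|_{\dot{S}^0}\lesssim\|v(t_0)\|_{L^2_x}+\|G\|_{\dot{N}^0}$.

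Next I would write Duhamel's formula,
\[
v(t)=e^{i(t-t_0)\Delta}v(t_0)-i\int_{t_0}^{t}e^{i(t-s)\Delta}G(s)\,ds,
\]
and treat the two terms separately. For the homogeneous term the essential inputs are the dispersive bound $\|e^{it\Delta}f\|_{L^\infty_x}\lesssim|t|^{-3/2}\|f\|_{L^1_x}$ together with the $L^2$-isometry $\|e^{it\Delta}f\|_{L^2_x}=\|f\|_{L^2_x}$. Interpolating these and running the $TT^*$ argument, in which the dispersive kernel decay feeds Hardy--Littlewood--Sobolev in the time variable, yields $\|e^{i(\cdot-t_0)\Delta}v(t_0)\|_{L^q_tL^r_x}\lesssim\|v(t_0)\|_{L^2_x}$ for every admissible pair $(q,r)\in\Lambda_0$ with $\tfrac{2}{q}+\tfrac{3}{r}=\tfrac32$. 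Taking the supremum over $(q,r)$ controls this term by $\|v(t_0)\|_{L^2_x}$.

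For the inhomogeneous term I would first establish the untruncated estimate $\bigl\|\int_{\R}e^{i(t-s)\Delta}G(s)\,ds\bigr\|_{L^q_tL^r_x}\lesssim\|G\|_{L^{\tilde q'}_tL^{\tilde r'}_x}$ for admissible $(q,r),(\tilde q,\tilde r)$ by composing the homogeneous estimate with its dual, and then pass to the retarded integral $\int_{t_0}^t$. For non-endpoint pairs, where the time exponents separate strictly, this truncation is legitimized by the Christ--Kiselev lemma. Taking the supremum over $(q,r)$ and the infimum over admissible decompositions of $G$ rewrites the right-hand side as $\|G\|_{\dot{N}^0}$, completing the bound.

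The main obstacle is the endpoint pair, which in dimension three is $(q,r)=(2,6)$: here the elementary $TT^*$/Hardy--Littlewood--Sobolev scheme fails for the homogeneous estimate, and Christ--Kiselev is inapplicable to the retarded estimate because the time exponents coincide. Both are recovered only through the bilinear real-interpolation argument of Keel--Tao \cite{KT}, which dyadically decomposes the separation $|t-s|$ and interpolates the resulting off-diagonal bounds. Once this endpoint case is secured, all intermediate admissible pairs follow by interpolation, which yields the full $\dot{S}^0$ estimate and hence, after undoing the reduction, the stated $\dot{S}^k$ bound.
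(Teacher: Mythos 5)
The paper offers no proof of this lemma; it is quoted as a standard result with citations to Cazenave, Keel--Tao, and Tao's book, and your outline is precisely the classical argument those references contain (reduction to $k=0$ by commuting $D^k$ through the equation, dispersive estimate plus $TT^*$ and Christ--Kiselev for non-endpoint pairs, and the Keel--Tao bilinear interpolation for the endpoint $(2,6)$ in three dimensions). Your proposal is correct and takes essentially the same approach the paper implicitly relies on.
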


For a time slab $I\subset \R$, we define
$$X_I^0=L_t^{10}L_x^{\frac{30}{13}}(I\times\R^3)\cap L_t^8L_x^{\frac{12}{5}}(I\times\R^3),$$
$$\dot{X}_I^1=\{f:\nabla f\in X_I^0\},\ \ \ X_I^1=X_I^0\cap\dot{X}_I^1.$$

Next, we will present two lemmas which play an important role in scattering-criterion of Lemma \ref{SC}.

\begin{lemma}[\cite{Tao}]\label{CR}
  For $f\in \dot{H}^1(\R^3) ,$
we have \begin{align}
    \|e^{it\Delta}f\|_{L_t^4L_x^\infty(\R\times\R^3)}\lesssim \|f\|_{\dot{H}^1}.
  \end{align}
\end{lemma}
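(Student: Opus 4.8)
The plan is to establish the estimate by the $TT^*$ method, reducing matters to a pointwise decay bound for an explicit convolution kernel together with the one-dimensional Hardy--Littlewood--Sobolev inequality in the time variable. First I would rephrase the statement: setting $g=Df$ with $D=(-\Delta)^{1/2}$, the inequality is equivalent to $\|D^{-1}e^{it\Delta}g\|_{L_t^4L_x^\infty(\R\times\R^3)}\lesssim\|g\|_{L_x^2}$, i.e. to the boundedness of the operator $Sg:=D^{-1}e^{it\Delta}g$ from $L_x^2$ into $L_t^4L_x^\infty$. Since $L_x^2$ is a Hilbert space, by the $TT^*$ argument it suffices to bound $SS^*$ as an operator from $L_t^{4/3}L_x^1$ (a norming subspace of the dual of $L_t^4L_x^\infty$) into $L_t^4L_x^\infty$, and a direct computation using $(e^{it\Delta})^*=e^{-it\Delta}$ gives $SS^*G(t)=\int_\R D^{-2}e^{i(t-s)\Delta}G(s)\,ds$.

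The heart of the argument is the kernel bound. The operator $D^{-2}e^{i\tau\Delta}$ is convolution with a kernel $K_\tau$, and parabolic scaling of the symbol $|\xi|^{-2}e^{-i\tau|\xi|^2}$ yields $K_\tau(x)=|\tau|^{-1/2}K_1(|\tau|^{-1/2}x)$. Reducing $K_1$ to a radial one-dimensional integral and evaluating the resulting Fresnel integrals, I expect to find that $K_1$ is bounded near the origin (the apparent singularity of $|\xi|^{-2}$ is integrable in three dimensions) and decays like $|x|^{-1}$ at infinity, hence $K_1\in L^\infty(\R^3)$ and $\|K_\tau\|_{L_x^\infty}\lesssim|\tau|^{-1/2}$. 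Granting this, Minkowski's and Young's inequalities give $\|SS^*G(t)\|_{L_x^\infty}\lesssim\int_\R|t-s|^{-1/2}\|G(s)\|_{L_x^1}\,ds$, and the Hardy--Littlewood--Sobolev inequality on $\R$ with kernel $|t|^{-1/2}$ maps $L_t^{4/3}$ into $L_t^4$ (the exponents satisfy $\tfrac14=\tfrac34+\tfrac12-1$), which closes the estimate and yields the lemma.

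The main obstacle is precisely the kernel estimate $\|K_\tau\|_{L_x^\infty}\lesssim|\tau|^{-1/2}$: because the multiplier $|\xi|^{-2}$ is singular at the origin, boundedness and decay of $K_1$ are not automatic and must be extracted from the oscillation of $e^{-i|\xi|^2}$ via a careful stationary-phase/Fresnel analysis, and it is here that the dimension $n=3$ and the order of smoothing enter decisively. I would also note that the more naive route---Littlewood--Paley decomposition, Bernstein's inequality to pass from $L_x^r$ to $L_x^\infty$, and the $\dot H^1$-admissible Strichartz bound at $(4,r)$ with $r<\infty$---does give the clean frequency-localized estimate $\|e^{it\Delta}P_Nf\|_{L_t^4L_x^\infty}\lesssim\|P_Nf\|_{\dot H^1}$, but summing these loses, since $L_x^\infty$ carries no square-function structure and one is left with an $\ell^1$ rather than $\ell^2$ sum over dyadic frequencies; this is exactly the endpoint difficulty that the $TT^*$ argument circumvents.
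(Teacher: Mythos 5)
The paper offers no proof of this lemma at all---it is imported verbatim from the cited reference---so there is nothing internal to compare against; the question is only whether your argument stands on its own, and it does. The $TT^*$ reduction is the standard route to this $r=\infty$ endpoint: the pair $(4,\infty)$ is $\dot H^1$-admissible in dimension three, testing against the norming class $L_t^{4/3}L_x^1$ is legitimate for the $L_t^4L_x^\infty$ norm, and the exponent arithmetic in the one-dimensional Hardy--Littlewood--Sobolev step ($\tfrac14=\tfrac34-\tfrac12$) is correct. The entire weight of the proof rests, as you say, on $\|K_1\|_{L_x^\infty}<\infty$ for $K_1=\mathcal{F}^{-1}\bigl(|\xi|^{-2}e^{-i|\xi|^2}\bigr)$, and this does hold: writing $K_1(x)=\tfrac{4\pi}{|x|}\int_0^\infty\rho^{-1}\sin(\rho|x|)e^{-i\rho^2}\,d\rho$, the inner integral is $O(|x|)$ for $|x|\le1$ (bound $|\rho^{-1}\sin(\rho|x|)|\le|x|$ and integrate by parts in the Fresnel tail) and $O(1)$ for $|x|\ge1$ (the stationary point $\rho=|x|/2$ contributes $O(|x|^{-1})$, the endpoint region $\rho\lesssim|x|^{-1}$ contributes $O(1)$), giving $|K_1(x)|\lesssim\min(1,|x|^{-1})$. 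One caution when you carry this out: do not split $\sin(\rho|x|)$ into its two exponentials near $\rho=0$, since each alone diverges logarithmically against $\rho^{-1}$; keep them paired on $\rho\le|x|^{-1}$ and separate only on the tail. Also note that only $K_1\in L^\infty$ is needed for the Young step, the $|x|^{-1}$ decay is a bonus, and that no radial hypothesis on $f$ enters anywhere (the kernel is radial because the symbol is, which is all your reduction uses)---consistent with the lemma being stated for general $\dot H^1$ data even though the rest of the paper is radial. Your closing remark is also accurate: the Littlewood--Paley/Bernstein route only yields the $\dot B^1_{2,1}$ Besov norm on the right-hand side because of the $\ell^1$ summation over dyadic blocks, which is exactly why the $TT^*$ argument is the right tool at the $\dot H^1$ endpoint.
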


We recall the scattering result of the focusing energy-critical equation \eqref{equ:nlserergy}.
 \begin{theorem}\label{ect1}
Let $v(t)$ be the solution of  \eqref{equ:nlserergy} with data
$u_0\in H^1(\R^3)$. If there exists $0<\delta_0<1$ so that
$E^c(u_0)<(1-\delta_0)E^c(W)$ and $\|\nabla u_0\|_2<\|\nabla W\|_2$,
then there exists a constant $C_{\delta_0}(\|u_0\|_2)$ such that
   $$\|v(t)\|_{L_t^qW^{1,r}(\R\times \R^3)}\leq C_{\delta_0}(\|u_0\|_2)$$
   where $(q,r)\in \Lambda_0$.
 \end{theorem}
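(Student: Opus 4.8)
The plan is to bootstrap the homogeneous $\dot H^1$ scattering furnished by Theorem \ref{thm:km} up to the full inhomogeneous $W^{1,r}$ spacetime bound, the extra ingredient being conservation of mass, which is available precisely because $u_0\in H^1$ rather than merely $\dot H^1$. First I would record the consequences of Theorem \ref{thm:km}: since $\|\nabla u_0\|_2<\|\nabla W\|_2$ and $E^c(u_0)\le(1-\delta_0)E^c(W)<E^c(W)$, the solution $v$ is global and scatters, and the concentration-compactness/rigidity theory underlying Theorem \ref{thm:km} yields a global bound $\|v\|_{\dot S^1(\R\times\R^3)}\le C(E^c(u_0))$; because the energy stays a definite distance $\delta_0 E^c(W)$ below threshold, this constant is finite and may be taken to depend only on $\delta_0$. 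In particular, by Sobolev embedding (the pair $(10,\tfrac{30}{13})\in\Lambda_0$ gives $\|v\|_{L_t^{10}L_x^{10}}\lesssim\|D v\|_{L_t^{10}L_x^{30/13}}\le\|v\|_{\dot S^1}$) the critical scattering norm satisfies $\|v\|_{L_t^{10}L_x^{10}}\le A(\delta_0)$. This already controls the derivative part $\|\nabla v\|_{L_t^qL_x^r}$ of the $W^{1,r}$ norm for every $(q,r)\in\Lambda_0$, so it remains only to bound the zeroth-order part $\|v\|_{\dot S^0}$.

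For the $\dot S^0$ bound I would run a Strichartz bootstrap at the level of mass. The nonlinear estimate I will use is
\[
\||v|^4v\|_{\dot N^0(I)}\le\||v|^4v\|_{L_t^{10/7}L_x^{10/7}(I)}\lesssim\|v\|_{L_t^{10}L_x^{10}(I)}^4\,\|v\|_{L_t^{10/3}L_x^{10/3}(I)}\lesssim\|v\|_{L_t^{10}L_x^{10}(I)}^4\,\|v\|_{\dot S^0(I)},
\]
where the first inequality uses that $(\tfrac{10}{3},\tfrac{10}{3})\in\Lambda_0$ is dual to $(\tfrac{10}{7},\tfrac{10}{7})$, the Hölder split is $\tfrac{7}{10}=\tfrac{2}{5}+\tfrac{3}{10}$ (in space $|v|^4\in L^{5/2}$, $v\in L^{10/3}$, and likewise in time), and the last step uses $(\tfrac{10}{3},\tfrac{10}{3})\in\Lambda_0$. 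Feeding this into Lemma \ref{Stri} with $k=0$ on a subinterval $I$ with left endpoint $t_0$ gives
\[
\|v\|_{\dot S^0(I)}\le C\|v(t_0)\|_2+C\|v\|_{L_t^{10}L_x^{10}(I)}^4\|v\|_{\dot S^0(I)}=C\|u_0\|_2+C\|v\|_{L_t^{10}L_x^{10}(I)}^4\|v\|_{\dot S^0(I)},
\]
the last equality by conservation of mass. Since $\|v\|_{L_t^{10}L_x^{10}(\R)}\le A(\delta_0)<\infty$, I can partition $\R$ into $J=J(\delta_0)$ consecutive intervals $I_1,\dots,I_J$ on each of which $C\|v\|_{L_t^{10}L_x^{10}(I_j)}^4\le\tfrac12$; absorbing the nonlinear term then yields $\|v\|_{\dot S^0(I_j)}\le2C\|u_0\|_2$ for every $j$.

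Finally I would reassemble the pieces. For any $(q,r)\in\Lambda_0$ with $q<\infty$, additivity of $\|v(\cdot)\|_{L_x^r}^q$ over the partition gives $\|v\|_{L_t^qL_x^r(\R)}\le J^{1/q}\max_j\|v\|_{\dot S^0(I_j)}\le2CJ^{1/2}\|u_0\|_2$ (using $q\ge2$), while the endpoint $q=\infty$ is just $\sup_t\|v(t)\|_2=\|u_0\|_2$ by mass conservation; taking the supremum over $\Lambda_0$ gives $\|v\|_{\dot S^0(\R)}\le C_{\delta_0}(\|u_0\|_2)$. Combining with the $\dot S^1$ bound from Theorem \ref{thm:km} yields $\|v\|_{L_t^qW^{1,r}}\lesssim\|v\|_{\dot S^0}+\|v\|_{\dot S^1}\le C_{\delta_0}(\|u_0\|_2)$ for all $(q,r)\in\Lambda_0$, as claimed. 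I expect the only genuinely delicate point to be the justification that the critical spacetime norm is bounded by a function of the energy alone, kept finite by the gap $\delta_0$; this is exactly where Theorem \ref{thm:km} is indispensable. Everything after that is the routine mass-level bootstrap, whose one essential input is the conservation of $\|v(t)\|_2$ afforded by the $H^1$ (rather than merely $\dot H^1$) hypothesis.
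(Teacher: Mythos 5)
Your proposal is correct and follows essentially the same route as the paper: quote the Kenig--Merle bound $\|v\|_{L_t^q\dot W^{1,r}}\le C_{\delta_0}$ (hence control of the critical norm $\|v\|_{L_{t,x}^{10}}$), then close the $\dot S^0$ bound by Strichartz, H\"older and mass conservation via a bootstrap over finitely many subintervals. The only difference is the choice of dual pair in the nonlinear estimate (you use $L_t^{10/7}L_x^{10/7}$ where the paper uses $L_t^2L_x^{6/5}$), which is immaterial; your write-up simply spells out the bootstrap that the paper leaves implicit.
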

\begin{proof}
  From Kenig-Merle \cite{KM}, we have $$\|v(t)\|_{L_t^q \dot W^{1,r}(\R\times \R^3)}\leq C_{\delta_0},$$
   where $(q,r)\in \Lambda_0$.
  By the Strichartz estimates, we have
  \beq
  \|v\|_{\dot S^0(\R)} \lesssim \|v_0\|_{L^2} + \| |v|^4v\|_{L^2_tL^\frac65_x}
  \lesssim \|v_0\|_{L^2} +   \| v\|^4_{L^{10}_{t,x}}     \|v\|_{L^{10}_t L^\frac{30}{13}_x},
  \eeq
  which yields the conclusion by a bootstrap  argument.

\end{proof}

 \section{Global well-posedness}\label{a}

 \noindent

In this section, we will give a good local well-posedness. It plays a important role in global well-posedness theory and scattering theory. The idea is originally due to Zhang \cite{Zhang}.

 Let $T>0$ be a small constant to be specified later and $v(t)$ is the solution of \eqref{equ:nlserergy} with the radial data $u_0$, then by Lemma \ref{ect1} we have\begin{align}\label{eneq1}
   \|v(t)\|_{X_{\R}^1}\leq C(\delta_0,\|u_0\|_2).
 \end{align}

 It suffices to solve the 0-data initial value problem of $w(t,x)$: 
 \begin{align}\label{xc1}
  \begin{cases}
     &iw_t+\Delta w =|w+v|^2(v+w)-|v+w|^4(v+w)+|v|^4v, \ \ \  x\in \mathbb{R}^3,\\
   &w(0,x)=0
  \end{cases}
\end{align}
on the time interval $[0,T]$.

In order to solve \eqref{xc1}, we subdivide $[0,T]$ into finite subintervals such that on each subinterval, the influence of $v$ to problem \eqref{xc1} is small. Let $\eta$ be small constant. In view of \eqref{eneq1}, we can divide $[0,T]$ into subintervals $I_1,...,I_J$ such that on each $I_j=[t_j,t_{j+1}]$,
\beq\label{v-Ij-small}
\|v\|_{X_{I_j}^1}\sim \eta,\ \ 1\leq j\leq J.
\eeq
It's easy to get $J\leq C(\delta_0, \eta, \|u_0\|_2)$.


 Now we aim to solve \eqref{xc1} by inductive arguments.
More precisely, we claim that for each $1\leq j\leq J$, 
 \eqref{xc1} has a unique solution $w$ on $I_j$ such that
\begin{align}\label{xc2}
  \|w\|_{L_t^\infty H^1(I_j\times \R^3)}+\|w\|_{X_{I_j}^1}\leq (2C)^jT^\frac12.
\end{align}
First, we assume \eqref{xc1} has been solved on $I_{j-1}$ and the solution $w$ satisfies the bound \eqref{xc2} for $j-1$. Then we only consider the problem on $I_j$.
Define the solution map $\Gamma$:
$$\Gamma w= e^{i(t-t_j)\Delta} w(t_j)-i\int_{t_j}^ t e^{i(t-s)\Delta}\left[|v+w|^2(v+w)-|v+w|^4(v+w)+|v|^4v\right](s)ds.$$
And  we will show that $\Gamma$ maps the complete set
$$\mathcal{B}=\{w:\|w\|_{L_t^\infty H^1(I_j\times \R^3)}+\|w\|_{X_{I_j}^1}\leq (2C)^jT^\frac12\}$$
into itself and is contractive under  the norm $\|\cdot\|_{X_{I_j}^0}$.

 Indeed, by  Strichartz, Sobolev,  and H\"older's inequality, we have
\begin{align*}
  \|\Gamma w\|_{L_t^\infty H^1(I_j\times \R^3)}+\|\Gamma w\|_{X_{I_j}^1}&\leq C\|w(t_i)\|_{H^1}+C\sum_{i=0}^{4}\|v\|_{X_{I_j}^1}^i\|w\|_{X_{I_j}^1}^{5-i}+CT^\frac12\|v+w\|_{X_{I_j}^1}^3\\
  &\leq C\|w(t_i)\|_{H^1}+C\sum_{i=0}^{4}\|v\|_{X_{I_j}^1}^i\|w\|_{X_{I_j}^1}^{5-i}+CT^\frac12\|w\|_{X_{I_j}^1}^3
  +CT^\frac12\eta^3,
\end{align*}
where $C$ is Strichartz constant.
Utilizing \eqref{v-Ij-small}  and our inductive assumption $\|w(t_j)\|_{H^1}\leq(2C)^{j-1}T^\frac12$, we see that for $w\in \mathcal{B}$,
\begin{align}\label{xce1}
  \|\Gamma w\|_{L_t^\infty H^1(I_j\times \R^3)}+\|\Gamma w\|_{X_{I_j}^1}\leq& C(2C)^{j-1}T^\frac12\\ \label{xce2}
  &+C(2C)^jT^\frac12\eta^4+CT^\frac12\eta^3\\\label{xce3}
  &+C\sum_{i=0}^{3}[(2C)^jT^\frac12]^{5-i}\eta^i+C((2C)^jT^\frac12)^3T^\frac12.
\end{align}
It is easy to observe that \eqref{xce1}$=\frac12(2C)^jT^\frac12$. We choose $\eta=\eta(C)$ and $T$ small enough such that $$\eqref{xce2} +\eqref{xce3} \leq \frac14(2C)^jT^\frac12.$$
By the fact $J\leq C(\delta_0,\eta,\|u_0\|_2)$, we can choose $T$ 
uniformly  of the process of induction. By the small token, for $w_1, w_2\in \mathcal{B}$, we also have
$$\|\Gamma w_1-\Gamma w_2\|_{X_{I_j}^0}\leq \frac12\|w_1-w_2\|_{X_{I_j}^0}.$$
From the fixed point theorem,  we can obtain a unique solution $w$ of \eqref{xc1} on $I_j$ which satisfies the bound \eqref{xc2}. Therefore, we get a unique solution of \eqref{xc1} on $[0,T]$ such that
$$\|w\|_{X_{[0,T]}}\leq \sum_{j=1}^{J}\|w\|_{X_{I_j}^1}\leq \sum_{j=1}^{J}(2C)^jT^\frac12\leq C(2C)^JT^\frac12\leq C(\delta_0,\eta,\|u_0\|_2).$$
Since the parameter  $\eta$ only depends on the Strichartz estimates, we have $u=v+w$ is the  solution to the Cauchy problem \eqref{Ct} on $[0,T]$ satisfying
$$\|u\|_{X_{[0,T]}^1}\leq \|v\|_{X_{[0,T]}^1}+\|w\|_{X_{[0,T]}^1}\leq C(\delta_0,\|u_0\|_2).$$

We briefly review some of variational analysis related to the ground state $W$. The ground state $W$ optimizes the sharp Sobolev  inequality: 
$$\|f\|_6^6\leq C_3 \|\nabla f\|_2^6.$$
By a simple calculation, we deduce
\begin{align}\label{Var}
 \|\nabla W\|_2^2=\|W\|_6^6\ \  and \ \ C_3=\|\nabla W\|_2^{-4}.
\end{align}
From this and the ``global well-posedness'' above, we can deduce the following
 important property.

\begin{lemma}[Coercivity I]\label{gl}
If $E(u_0)\leq (1-\delta_0)E^c(W)$ and $\|\nabla u_0\|_2<\|\nabla W\|_2$, then there exists $\delta_1=\delta_1(\delta_0)>0$ so that
$$\|\nabla u(t)\|_2^2\leq(1-\delta_1)\|\nabla W\|_2^2.$$
for all $t\in I_{max}$.

In particular, $I_{max}=\R$ and $u(t)$ is uniformly bounded in $H^1$. And for any compact time interval $I\in \R$, we have
\begin{align}\label{zy}
\|u\|_{L_t^qW^{1,r}(I\times \R^3)}\leq |I|^\frac1qC(\delta_0,\|u_0\|_2).
\end{align}
\end{lemma}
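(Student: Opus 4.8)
The plan is to reduce the full energy to the modified (energy-critical) energy, then run the standard energy-critical coercivity argument, and finally combine the resulting kinetic-energy bound with the good local well-posedness of Section \ref{a} to upgrade to global existence and uniform spacetime control. The first observation is that the defocusing cubic term only contributes nonnegative potential energy: since $E(\varphi)-E^c(\varphi)=\tfrac14\|\varphi\|_4^4\geq 0$, conservation of energy yields
\[
E^c(u(t))\leq E(u(t))=E(u_0)\leq (1-\delta_0)E^c(W)
\]
for every $t\in I_{max}$. This is the one place where the structure of \eqref{Ct} (rather than the pure quintic equation) enters, and it is exactly what lets me import the variational picture of the energy-critical problem wholesale.

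For the coercivity step I would use the sharp Sobolev inequality $\|u\|_6^6\leq C_3\|\nabla u\|_2^6$ together with \eqref{Var}. Setting $y(t)=\|\nabla u(t)\|_2^2$ and $f(y)=\tfrac12 y-\tfrac16 C_3 y^3$, we have $E^c(u(t))\geq f(y(t))$, and a direct computation shows $f$ increases on $[0,y^*]$ and decreases afterwards, with $y^*=\|\nabla W\|_2^2$ and $f(y^*)=E^c(W)$. Since $y(0)<y^*$ and $f(y(t))\leq (1-\delta_0)f(y^*)<f(y^*)$, the continuity of $t\mapsto y(t)$ (from $u\in C(I_{max};H^1)$) forbids $y(t)$ from ever reaching $y^*$, so $y(t)$ remains on the increasing branch $[0,y^*)$. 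On that branch the bound $f(y(t))\leq (1-\delta_0)f(y^*)$ becomes, after rescaling $z=y/y^*$, the inequality $g(z(t))\leq (1-\delta_0)g(1)$ with $g(z)=\tfrac12 z-\tfrac16 z^3$, which forces $z(t)\leq z_1<1$ for a threshold $z_1=z_1(\delta_0)$ depending only on $\delta_0$. Taking $\delta_1=1-z_1$ gives the asserted bound $\|\nabla u(t)\|_2^2\leq (1-\delta_1)\|\nabla W\|_2^2$.

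For global well-posedness and the spacetime estimate, the kinetic bound just obtained together with mass conservation shows $u(t)$ is uniformly bounded in $H^1$ on $I_{max}$. More importantly, at every time $t_0\in I_{max}$ the data $u(t_0)$ again satisfies $E^c(u(t_0))\leq E(u_0)\leq (1-\delta_0)E^c(W)$ and $\|\nabla u(t_0)\|_2<\|\nabla W\|_2$, so Theorem \ref{ect1} applies to the energy-critical reference solution started from $u(t_0)$ with constants depending only on $\delta_0$ and $\|u_0\|_2$. Feeding this into the good local well-posedness scheme produces a solution on $[t_0,t_0+T]$ with a step $T=T(\delta_0,\|u_0\|_2)$ that is uniform in $t_0$; iterating prevents any finite blow-up time, whence $I_{max}=\R$. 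Finally, given a compact interval $I$, I would cover it by $\sim |I|/T$ subintervals of length $T$, apply the uniform $X^1$ bound on each, and sum: since $\|u\|_{L^q_t W^{1,r}(I)}^q=\sum_j\|u\|_{L^q_t W^{1,r}(I_j)}^q\lesssim (|I|/T)\,C^q$, this yields $\|u\|_{L^q_t W^{1,r}(I)}\leq |I|^{1/q}C(\delta_0,\|u_0\|_2)$.

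The main obstacle I anticipate is not the coercivity inequality itself, which is routine once the reduction to $E^c$ is in place, but rather verifying that the good local well-posedness can be restarted with a \emph{uniform} time step: one must confirm that the energy-critical hypothesis of Theorem \ref{ect1} persists at every restart point $t_0$, which is precisely what the preserved bound $\|\nabla u(t_0)\|_2<\|\nabla W\|_2$ and energy conservation guarantee.
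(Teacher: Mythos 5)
Your proposal is correct and follows essentially the same route as the paper: reduce to the modified energy $E^c$ by discarding the nonnegative quartic term, apply the sharp Sobolev inequality and a continuity argument in $y(t)=\|\nabla u(t)\|_2^2/\|\nabla W\|_2^2$ to get the kinetic-energy gap, then iterate the good local well-posedness with a uniform time step to conclude globality and the $|I|^{1/q}$ spacetime bound. You simply make explicit several steps (the inequality $E^c(u(t))\leq E(u(t))$, the persistence of the hypotheses of Theorem \ref{ect1} at each restart time, and the summation over subintervals) that the paper's proof leaves implicit.
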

\begin{proof}
By the energy conservation, we have
\begin{align}
  (1-\delta_0)(\frac12\|\nabla W\|_2^2-\frac16\|W\|_6^6)\geq \frac12\|\nabla u(t)\|_2^2-\frac16\|u(t)\|_6^6.
\end{align}
This  and the Sharp Sobolev inequality \eqref{Var} imply
\begin{align}
  (1-\delta_0)\geq \frac32 y(t)-\frac12y(t)^3,
\end{align}
where $y(t)=\frac{\|\nabla u(t)\|_2^2}{\|\nabla W\|_2^2}$.
By the fact $y(0)<1$ and continuity  arguments, there exists a constant $\delta_1<1$ such that
$$\|\nabla u(t)\|_2^2\leq(1-\delta_1)\|\nabla W\|_2^2,\ \ \forall t\in I_{max}.$$
This combines with  the ``good local well-posedness" above yield the global well-posedness of $u$.
Since $T$ is a fixed constant depending $\delta_0$, one can obtain $T$ is fixed constant.
\end{proof}

\section{Scattering Criterion}

\noindent

 In \cite{Tao}, Tao established a scattering criterion for radial solution to energy-subcritical NLS. In this paper, we extend it to the  energy-critical  case.
\begin{lemma}[Scattering Criterion]\label{SC}
  Suppose $u:\R_t\times\R^3\rightarrow \mathbb{C}$ is a radial solution to \eqref{Ct} such that
  \begin{align}\label{zz0}
    \|u\|_{L_t^\infty H_x^1(\R\times\R^3)}\leq E.
  \end{align}
There exist $\epsilon=\epsilon(E)>0$ and $R=R(E)>0$ such that if
  \begin{align}\label{zz1}
   \liminf_{t\to \infty}\int_{|x|\leq R}|u(t,x)|^6dx\leq \epsilon ^6,
  \end{align}
then $u$ scatters forward in time.
\end{lemma}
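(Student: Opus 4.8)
The plan is to follow the Dodson--Murphy philosophy: reduce forward scattering to the smallness of a free evolution in a critical space-time norm, and then make that norm small by a three-piece Duhamel decomposition anchored at a carefully chosen time $T$. For the reduction, I would invoke the stability/perturbation theory underlying the good local well-posedness of Section~\ref{a} together with the global Strichartz control \eqref{zy} of Lemma~\ref{gl}: scattering of \eqref{Ct} in $H^1$ is equivalent to finiteness of a critical scattering norm such as $\|u\|_{L^{10}_{t,x}([0,\infty))}$, and by a standard continuity argument it suffices to produce a single time $T$ for which $\|e^{i(t-T)\Delta}u(T)\|_{L^{10}_{t,x}([T,\infty))}$ lies below the small-data threshold. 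The nonlinear solution then inherits a finite global scattering norm by bootstrapping through Lemma~\ref{Stri}, and forward scattering follows.

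The hypothesis \eqref{zz1} enters only once, to produce \emph{global} smallness of the potential energy at the chosen time. I would pick $T=T_n$ along the sequence realizing the $\liminf$ and split space into $\{|x|\le R\}$ and $\{|x|>R\}$. On the exterior the radial Sobolev embedding Lemma~\ref{lem2.1} gives $|u(T,x)|\lesssim E/|x|$, whence $\int_{|x|>R}|u(T)|^6\,dx\lesssim E^6R^{-3}$; combined with \eqref{zz1} this yields $\|u(T)\|_6^6\lesssim \epsilon^6+E^6R^{-3}$, which is as small as we like once $\epsilon=\epsilon(E)$ and $R=R(E)$ are fixed appropriately. This is the precise sense in which ``local potential energy evacuation'' is upgraded to a global statement.

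Next I would write, for $t\ge T$, the Duhamel formula for $e^{i(t-T)\Delta}u(T)$ back to the initial time and split $\int_0^T=\int_0^{T-T_0}+\int_{T-T_0}^T$ with $T_0$ a large parameter, estimating the three resulting contributions separately in $L^{10}_{t,x}([T,\infty))$. The linear term $e^{it\Delta}u_0$ has small tail for $T$ large, by the global finiteness in Lemma~\ref{CR} (and Strichartz) together with dominated convergence. For the distant-past term $\int_0^{T-T_0}$ the time gap $t-s\ge T_0$ lets me use the dispersive decay $\|e^{i\tau\Delta}g\|_{L^{10}}\lesssim \tau^{-6/5}\|g\|_{L^{10/9}}$; since $\|F(u(s))\|_{10/9}\lesssim \|u(s)\|_{10/3}^3+\|u(s)\|_{50/9}^5\lesssim E^3+E^5$ is uniformly controlled (both exponents lie below $6$, so even the energy-critical quintic is harmless in this dual norm), integrating in $s$ and then in $t$ yields a factor $T_0^{-1/10}$, which is small for $T_0$ large.

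The main obstacle is the recent term $\int_{T-T_0}^T e^{i(t-s)\Delta}F(u(s))\,ds$, where dispersion fails because the kernel $(t-s)^{-6/5}$ is non-integrable as $s\to t$, forcing the use of inhomogeneous Strichartz and reducing matters to bounding $\|F(u)\|_{\dot N^1([T-T_0,T])}$. Here the energy-critical quintic is genuinely delicate: the natural estimate $\||u|^4u\|_{\dot N^1}\lesssim\|u\|_{L^4_tL^\infty_x}^4\|u\|_{\dot S^1}$, controlled through Lemma~\ref{CR}, carries together with \eqref{zy} factors that \emph{grow} with the window length $T_0$, so all the decay must come from the smallness of the potential energy. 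The crux is therefore to beat this $T_0$-growth using the global smallness of $\|u(T)\|_6$: I would again split the nonlinearity spatially, treating the exterior $\{|x|>R\}$ by the pointwise radial decay $|u|\lesssim E/|x|$ (uniform in $s$) and the bounded interior $\{|x|\le R\}$ by the potential-energy smallness, combined with Hölder in time to trade the interval length against a small power of $\|u(T)\|_6$. Keeping the solution below the ground state via the coercivity of Lemma~\ref{gl} and the sharp Sobolev inequality \eqref{Var} is what allows the single-time smallness to be propagated across the window and the energy-critical contribution to stay controlled; this conversion of pointwise-in-time smallness into space-time control over $[T-T_0,T]$ is the heart of the argument. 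Once the recent term is shown to be $o(1)$, summing the three pieces gives the required free-evolution smallness, and scattering follows by the reduction above.
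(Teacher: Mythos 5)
Your overall architecture matches the paper's: reduce forward scattering to smallness of $\|e^{i(t-T_0)\Delta}u(T_0)\|_{L^{10}_{t,x}([T_0,\infty))}$, split the Duhamel integral into a linear piece, a distant past handled by the dispersive estimate (your $T_0^{-1/10}$ gain is the correct computation), and a recent past of length $\epsilon^{-\theta}$ handled by Strichartz plus Lemma \ref{CR} and \eqref{zy}. But there is a genuine gap exactly where you locate ``the heart of the argument.'' To beat the $\epsilon^{-\theta}$-growth of the Strichartz factors on the recent window $I_2=[T_0-\epsilon^{-\theta},T_0]$, one needs $\|u\|_{L^\infty_t L^6_x(I_2\times\R^3)}\ll 1$ \emph{uniformly on the whole window}, not just at the single anchor time $T_0$ where \eqref{zz1} gives smallness. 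You propose to propagate the single-time smallness across the window using ``coercivity of Lemma \ref{gl}, the sharp Sobolev inequality \eqref{Var}, and H\"older in time,'' but none of these can do it: coercivity and sharp Sobolev only give the uniform $O(1)$ bounds you already have, and H\"older in time cannot convert information at one time into information on an interval without a quantitative bound on the time derivative of the quantity being propagated.

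The missing ingredient is the localized flux identity. Writing $\partial_t|u|^6=-6\,\nabla\cdot\mathrm{Im}(|u|^4\bar u\nabla u)$ and pairing with the cutoff $\chi_R$, integration by parts puts the derivative on $\chi_R$ and yields
\begin{equation*}
\Bigl|\partial_t\int\chi_R|u|^6\,dx\Bigr|\lesssim \frac1R\,\|u\|_{L^6_x}^5\,\|u\|_{W^{1,6}_x}\lesssim \frac1R\,\|u(t)\|_{W^{1,6}_x},
\end{equation*}
so that integrating backward from $T_0$ over $I_2$ (using \eqref{zy} to control $\|u\|_{W^{1,6}}$ on an interval of length $\epsilon^{-\theta}$) costs only $O(\epsilon^{-\theta}/R)$, which is absorbed by choosing $R>\epsilon^{\theta-6}$. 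This is why the paper keeps the potential energy \emph{localized} via $\chi_R$ throughout: the $1/R$ gain comes from $\nabla\chi_R$, and the global $\int|u|^6\,dx$ that you pass to at time $T$ has no analogous slow-variation property (its time derivative involves a non-vanishing bulk term). The exterior region is then handled, as you say, by the radial Sobolev embedding Lemma \ref{lem2.1}, contributing $R^{-2/3}$ to $\|u\|_{L^\infty_tL^6_x}$. Without the flux identity your recent-past estimate does not close, and this identity is precisely the new ingredient that extends Tao's scattering criterion to the energy-critical power.
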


\begin{proof}
Let $0<\epsilon <1$ and $R\geq 1 $ to be chosen later. By Sobolev embedding, Strichartz and convergence, we may choose $T$ large enough depending $u_0$ such that
\ben\label{be:2}
  \|e^{it\Delta}u_0\|_{L_{t}^{10}L_x^{10}([T,\infty)\times \R^3)} \leq \epsilon.
\een

By assumption \eqref{zz1}, we may choose $T_{0}>T$ such that
\begin{align}\label{ave:1}
\int_{\mathbb{R}^3} \chi_{R}|u(T_{0},x)|^6dt\leq \epsilon^6.
\end{align}

Using Duhamel formula to write
\begin{align}
  u(t)=e^{i(t-T_{0})\Delta}u(T_{0})-i\int_{T_{0}}^{t}e^{i(t-s)\Delta}(|u|^2u(s)-|u|^4u)(s)ds.
\end{align}
By standard continuity argument, Sobolev embedding and Strichartz estimates, we just need to show  that
\begin{align*}
  \|e^{i(t-T_{0})\Delta}u(T_{0})\|_{L_{t}^{10}L_x^{10}([T_{0},\infty)\times \R^3)}\ll 1.
\end{align*}
Noting that
\begin{align*}
  e^{i(t-T_{0})\Delta}u(T_{0})=e^{it\Delta}u_0-iF_{1}(t)-iF_{2}(t),
\end{align*}
where
\begin{align*}
  F_{j}(t):=\int_{I_{j}}e^{i(t-s)\Delta}(|u|^2u(s)-|u|^4u)(s)ds, ~j=1,2\quad \text{ and } \quad I_{1}=[0,T_{0}-\epsilon ^{-\theta}],\quad I_{2}=[T_{0}-\epsilon ^{-\theta},T_{0}].
\end{align*}
Then, by \eqref{be:2}, it remains to show
\begin{align*}
  \|F_{j}(t)\|_{L_{t}^{10}L_x^{10}([T_{0},\infty)\times \mathbb{R}^3)}\ll 1,\quad \text{~ for ~} j=1,2.
\end{align*}

\textbf{Estimation  of $F_{1}(t)$}:
It follows from the dispersive estimate, Young's inequality and the Sobolev embedding that
\begin{align*}
  \|F_{1}(t)\|_{L_{t}^{10}L_x^{10}([T_{0},\infty)\times \mathbb{R}^3)}&\lesssim \big\|\int_{0}^{T_{0}-\epsilon^{-\theta}}|t-s|^{-\frac{6}{5}}(\|u\|_{L^{\frac{30}{9}}}^3+\|u\|_{L^{\frac{50}{9}}}^5)ds
  \big\|_{L_{t}^{10}([T_{0},\infty))}\\
  &\lesssim \big\|\int_{0}^{T_{0}-\epsilon^{-\theta}}|t-s|^{-\frac{6}{5}}(E^3+E^5)ds\big\|_{L_{t}^{10}([T_{0},\infty))}\\
  &\lesssim \|(t-T_{0}+\epsilon^{-\theta})^{\frac{-1}{5}}\|_{L_{t}^{10}([T_{0},\infty))}\\
  &\lesssim \epsilon ^{\frac{\theta}{4}}.
\end{align*}
Hence, we have
\ben\label{bq:1}
  \|F_{1}(t)\|_{L_{t}^{10}L_x^{10}([T_{0},\infty)\times \mathbb{R}^3)}\lesssim \epsilon ^{\frac{\theta}{4}},
\een
which is sufficiently small when $\theta > 0$.

\textbf{Estimation  of $F_{2}(t)$}:
%
%
From, H\"older's inequality, the Sobolev embedding, Lemma \ref{CR} and \eqref{zy}, one has for any interval $I$ that
 \begin{align*}
   \|u\|_{L_{t}^4L_x^\infty(I\times \mathbb{R}^3)}&\lesssim E+\|u\|_{L_t^2L_x^{\infty}}E+\|u\|_{L_t^8W_x^{1,\frac{12}{5}}}^4\|\nabla u\|_{L_t^2L_x^6} \\
   &\lesssim 1+|I|^\frac12+|I|\\
   &\lesssim 1+|I|.
 \end{align*}
By Sobolev, Strichartz and \eqref{zy}, we deduce
\begin{align*}
  \|F_{2}(t)\|_{L_{t}^{10}L_x^{10}([T_{0},\infty)\times \mathbb{R}^3)}\lesssim& \int_{T_{0}-\epsilon^{-\theta}}^{T_{0}} \|\big(|u|^2u-|u|^4u\big)\|_{H^1}ds\\
  \lesssim &\|u\|_{L_{t}^\infty L^3(I_{2}\times \mathbb{R}^3)}\|u\|_{L_{t}^2 L^\infty(I_{2}\times \mathbb{R}^3)}||u||_{L_{t}^2 W^{1,6}(I_{2}\times \mathbb{R}^3)}\\
  &+\|u\|_{L_t^\infty L_x^6(I_{2}\times \mathbb{R}^3)}^2\|u\|_{L_t^4L_x^\infty(I_{2}\times \mathbb{R}^3)}^2\|u\|_{L_t^2W^{1,6}(I_{2}\times \mathbb{R}^3)}\\
  \lesssim& \epsilon^{-\theta}\|u\|_{L_{t}^\infty L_x^3(I_{2}\times \mathbb{R}^3)}+\epsilon^{-\frac52\theta}\|u\|_{L_t^\infty L_x^6(I_{2}\times \mathbb{R}^3)}^2.
\end{align*}
From interpolation, it suffice to show
\begin{align*}
  \|u\|_{L_{t}^\infty L_x^6(I_{2}\times \mathbb{R}^3)}\ll 1.
\end{align*}
It is clear by H\"older's inequality that
\begin{align*}
\|u\|_{L_{t}^{\infty}L_{x}^{6}((I_{2}\times \mathbb{R}^3)}
~\leq ~&  \|\chi_{R}u\|_{L_{t}^{\infty}L_{x}^{6}}+\|(1-\chi_{R})u\|_{L_{t}^{\infty}L_{x}^{6}}\\
~\leq  ~& \|\chi_{R}u\|_{L_{t}^{\infty}L_{x}^{6}}
+\|(1-\chi_{R})u\|_{L_{t}^{\infty}L_{x}^{\infty}}^\frac{2}{3}
\|u\|_{L_{t}^{\infty}L_{x}^{2}}^\frac{1}{3}.
\end{align*}
Then, by  the radial Sobolev embedding Lemma \ref{lem2.1}, we deduce
\begin{align*}
\|(1-\chi_{R})u\|_{L_{t}^{\infty}L_{x}^{\infty}(I_2\times\mathbb{R}^3)}& \lesssim \frac{1}{R}\|(1-\chi_{R})|x|u\|_{L_{t}^{\infty}L_{x}^{\infty}(I_2\times \mathbb{R}^3)}\\
&\lesssim \frac{1}{R}\|u\|_{L_{t}^\infty H_{x}^1(I_2\times\mathbb{R}^3)}.
\end{align*}
Thus, we have  
\begin{align*}
\|u\|_{L_{t}^{\infty}L_{x}^{6}(I_{2}\times \mathbb{R}^3)} \lesssim \|\chi_{R}u\|_{L_{t}^{\infty}L_{x}^{6}(I_{2}\times \mathbb{R}^3)}+R^{-\frac23}.
\end{align*}
Using identity $\partial_{t}|u|^6=-6\nabla \text{~Im~} (|u|^4\bar{u}\nabla u)$ and \eqref{ave:1}, together with integration by parts and H\"older's inequality, we have
\begin{align*}
\big| \partial_{t}{ \int_{\mathbb{R}^3} \chi_{R}|u|^6(t)} dx \big| \lesssim \big| \int_{\mathbb{R}^3} {\nabla \chi_{R}|u|^4\bar{u} \nabla u}dx \big|\lesssim \frac1R\|u\|_{ L_x^6}^5\|u\|_{W^{1,6}}\lesssim \frac{1}{R}\|u(t)\|_{W^{1,6}}.
\end{align*}
Thus, choosing $R>\epsilon^{\theta-6}$, we have
\begin{align*}
\|\chi_Ru\|_{L_{t}^\infty L_{x}^6(I_{1}\times \mathbb{R}^3)} \lesssim \epsilon.
\end{align*}
 Then we have
 $$ \|F_{2}(t)\|_{L_{t}^{10}L_x^{10}([T_{0},\infty)\times \mathbb{R}^3)}\lesssim \epsilon^{\frac12-\theta}+\epsilon^{2-\frac52\theta}.$$
Finally, let $\theta=\frac{1}{4}$, we have $\|F_{2}(t)\|_{L_{t}^{10}L_x^{10}([T_{0},\infty)\times \mathbb{R}^3)}\lesssim \epsilon^{\frac14}$.
Then we complete the proof of Lemma \ref{SC} by choosing $\epsilon$ is sufficient small.
\end{proof}

\section{Proof of Theorem \ref{recall}}

\noindent

Throughout this section, we suppose $u(t)$ is a solution to \eqref{Ct} satisfying the hypotheses of Theorem \ref{recall}. In particular, using the result of Section \eqref{a}, we have that $u$ is global and uniformly bounded in $H^1$, and that there exists $\delta > 0$ such that
\begin{align}\label{b}
  \sup_{t\in \R}\|\nabla u(t)\|_2^2~<~(1-2\delta)\|\nabla W\|_2^2
\end{align}

We will prove that the potential energy of energy-critical escapes to spatial infinity as $t\rightarrow \infty .$
\begin{proposition}\label{Pro}
  There exists a sequence of times $t_n \rightarrow \infty$ and of radii $R_n \rightarrow \infty$ such that
  $$\lim_{n\to\infty}\int_{|x|\leq R_n}|u(t_n,x)|^6dx=0.$$
\end{proposition}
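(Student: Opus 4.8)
The plan is to establish a localized virial/Morawetz differential inequality and then extract the desired sequence by a time-averaging argument. I would introduce the Morawetz action
$$M_R(t)=2\,\mathrm{Im}\int_{\R^3}\bar u\,\nabla\phi_R\cdot\nabla u\,dx,$$
where $\phi_R$ is a smooth radial weight with $\phi_R(x)=\tfrac{|x|^2}{2R}$ on $|x|\le R$, with $\phi_R''\ge0$ everywhere and $|\nabla\phi_R|\le 1$ (so $\phi_R$ is convex and grows linearly at infinity). This weight is chosen for two reasons: its gradient is bounded independently of $R$, so that the uniform bound \eqref{b} and mass conservation give $|M_R(t)|\lesssim\|u\|_{L^2}\|\nabla u\|_{L^2}\lesssim E^2$ uniformly in $R$ and $t$; and on the ball $|x|\le R$ it agrees with the virial weight, so its Hessian does not degenerate on radial functions.

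First I would compute $\tfrac{d}{dt}M_R(t)$ from the Morawetz identity. On $|x|\le R$ the factor $\phi_R''=\tfrac1R$ produces the favorable kinetic term $\tfrac4R\int_{|x|\le R}|\nabla u|^2$, and $\Delta\phi_R=\tfrac3R$ produces, from the defocusing cubic and the focusing quintic respectively, a nonnegative term $\sim\tfrac1R\int_{|x|\le R}|u|^4$ (which I simply discard) together with an unfavorable term $\sim-\tfrac4R\int_{|x|\le R}|u|^6$. The transition region and the biharmonic term $\Delta^2\phi_R=O(R^{-3})$ contribute only $O(R^{-2})$, while the quintic tail $\int_{|x|>R}|u|^6\lesssim R^{-4}E^6$ is negligible by the radial Sobolev embedding of Lemma \ref{lem2.1}. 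Thus
$$\frac{d}{dt}M_R(t)\ge\frac{4}{R}\Big(\int_{|x|\le R}|\nabla u|^2-\int_{|x|\le R}|u|^6\Big)-\frac{C}{R^2}.$$

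The decisive step is to convert the right-hand side into a positive multiple of $\int_{|x|\le R}|u|^6$, i.e. to prove the local coercivity
$$\int_{|x|\le R}|\nabla u|^2-\int_{|x|\le R}|u|^6\ge c\delta\int_{|x|\le R}|u|^6-CR^{-2}$$
for all large $R$. This is the main obstacle: the global sharp Sobolev inequality \eqref{Var} together with \eqref{b} controls $\int_{\R^3}|u|^6$ by $\int_{\R^3}|\nabla u|^2$, but a priori the kinetic energy could concentrate \emph{outside} the ball, so global coercivity does not localize for free. I would resolve this by applying the sharp Sobolev inequality not to $u$ but to the truncation $\chi_R u$: since $\|\nabla(\chi_R u)\|_2^2\le\|\nabla u\|_2^2+O(R^{-2})\le(1-\delta)\|\nabla W\|_2^2$ for $R$ large by \eqref{b}, the cutoff function stays below the ground-state threshold, whence $\int|\chi_R u|^6\le(1-\delta)^2\|\nabla(\chi_R u)\|_2^2$; radial decay then controls the difference between $\chi_R u$ and $u$ on the ball. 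This is exactly where radiality is indispensable.

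Finally, integrating the differential inequality over $[0,T]$ and using $|M_R|\lesssim E^2$ gives
$$\frac{c\delta}{R}\int_0^T\int_{|x|\le R}|u|^6\,dx\,dt\lesssim E^2+\frac{CT}{R^2}.$$
Dividing by $T$ and choosing $R=R(T)=T^{1/2}$ makes both resulting terms $O(T^{-1/2})$, so the time average of $\int_{|x|\le T^{1/2}}|u|^6$ over $[T/2,T]$ tends to $0$. A pigeonhole choice of $t_T\in[T/2,T]$ with $\int_{|x|\le T^{1/2}}|u(t_T)|^6$ no larger than twice this average, followed by setting $T=T_n\to\infty$, $R_n=T_n^{1/2}\to\infty$, and $t_n=t_{T_n}\to\infty$, produces the claimed sequence.
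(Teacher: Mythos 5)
Your proposal is correct and follows essentially the same route as the paper: a virial/Morawetz action with a weight that is quadratic on $|x|\le R$ and linear outside, local coercivity obtained by applying the sharp Sobolev inequality to the truncation $\chi_R u$ (the paper's Lemma \ref{Cor}), and a time-averaging/pigeonhole argument to extract $t_n$ and $R_n$. The only differences are cosmetic: you normalize the weight by $R$ so the action is bounded uniformly (the paper instead bounds $\|M\|_{L^\infty_t}\lesssim_E R$) and you take $R=T^{1/2}$ where the paper takes $R=T^{1/3}$; both choices make the averaged bound vanish as $T\to\infty$.
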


Using Proposition \ref{Pro} and the scattering criterion above, we can quickly prove the first part of Theorem \ref{recall}. The other case is similar.

We prove Proposition \ref{Pro}, by a virial/Morawetz estimate. First, we need a lemma that gives \eqref{b} on large balls, so that we can exhibit the necessary coercivity. Let $\chi(x)$ be radial smooth function such that
 \begin{equation*}
  \chi(x)=\left\{
    \begin{aligned}
    &1,\ \  |x|\leq \frac12,\\
    &0,\ \  |x|>1.
 \end{aligned}\right.
 \end{equation*}
 Set $\chi_R(x):=\chi(\frac xR)$ for $R>0$.

 \begin{lemma}[Coercivity II]\label{QZ1}
  Suppose $\|\nabla f\|_2^2\leq (1-\delta_1)\|\nabla W\|_2^2$. Then there exist $\delta_2>0$ such that
  $$\|\nabla f\|_2^2-\|f\|_6^6\geq \delta_2 \|f\|_6^6.$$
\end{lemma}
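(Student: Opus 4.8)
The plan is to reduce the statement entirely to the sharp Sobolev inequality recorded in \eqref{Var}, namely $\|f\|_6^6 \leq C_3\|\nabla f\|_2^6$ with the optimal constant $C_3 = \|\nabla W\|_2^{-4}$. The role of the hypothesis $\|\nabla f\|_2^2 \leq (1-\delta_1)\|\nabla W\|_2^2$ is precisely to keep $f$ strictly below the threshold at which this inequality is saturated by the ground state $W$; because the right-hand side carries a cubic power of $\|\nabla f\|_2^2$ while the left carries only a first power, staying below the threshold forces the kinetic term to beat the potential term by a fixed multiplicative margin.

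Concretely, I would introduce the scale-invariant ratio $y := \|\nabla f\|_2^2 / \|\nabla W\|_2^2$, so that the hypothesis reads $y \leq 1-\delta_1 < 1$. Substituting $C_3 = \|\nabla W\|_2^{-4}$ into the sharp Sobolev inequality and rewriting the right-hand side in terms of $y$ gives
$$\|f\|_6^6 \leq \|\nabla W\|_2^{-4}\,\|\nabla f\|_2^6 = y^2\,\|\nabla f\|_2^2,$$
since $\|\nabla f\|_2^6 = y^3\|\nabla W\|_2^6$ and one factor $y\|\nabla W\|_2^2$ is exactly $\|\nabla f\|_2^2$. Thus the potential energy is dominated by the kinetic energy with a quantitative gain of $y^2$.

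Rearranging yields $\|\nabla f\|_2^2 \geq y^{-2}\|f\|_6^6$, whence
$$\|\nabla f\|_2^2 - \|f\|_6^6 \geq \big(y^{-2}-1\big)\,\|f\|_6^6.$$
Since $y \leq 1-\delta_1$, we have $y^{-2}-1 \geq (1-\delta_1)^{-2}-1 =: \delta_2 > 0$, a constant depending only on $\delta_1$, which is exactly the assertion. I do not expect any genuine obstacle here; the only point to monitor is that the gain $\delta_2$ be uniform, and this is guaranteed by the strict gap $\delta_1 > 0$. Indeed, if $\delta_1 = 0$ the constant would degenerate to zero, consistent with the fact that $W$ itself saturates the Sobolev inequality and makes $\|\nabla W\|_2^2 - \|W\|_6^6 = 0$ by \eqref{Var}.
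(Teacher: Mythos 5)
Your proof is correct and rests on exactly the same ingredient as the paper's: the sharp Sobolev inequality $\|f\|_6^6\leq C_3\|\nabla f\|_2^6$ with $C_3=\|\nabla W\|_2^{-4}$, combined with the hypothesis to get $\|f\|_6^6\leq(1-\delta_1)^2\|\nabla f\|_2^2$ and then rearranging. The paper routes the same computation through the identity $\|\nabla f\|_2^2-\|f\|_6^6=6E^c(f)-2\|\nabla f\|_2^2$, but that detour is cosmetic; your direct version is in fact cleaner and lands squarely on the stated form of the conclusion (a lower bound by a multiple of $\|f\|_6^6$), whereas the paper's write-up contains minor slips in the exponents and in the final display.
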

\begin{proof}
  Write $$\|\nabla f\|_2^2-\|f\|_6^6=6E^c(f)-2\|\nabla f\|_2^2.$$
  By the sharp Sobolev inequality 
  \begin{align}
    E^c(f)\geq& \frac12\|\nabla f\|_2^2[1-\frac13C_3\|\nabla f\|_2^4]\\
    \geq&\frac12[1-\frac13(1-\delta_1)]\\
  \geq  &\frac{2+\delta_1}{6}\|\nabla f\|_2^2.
  \end{align}
Thus, we have
$$\|\nabla f\|_2^2-\|f\|_6^6\geq \delta_1 \|\nabla f\|_2^2,$$ which implies
$$\|\nabla f\|_2^2-\|f\|_6^6\geq \frac{\delta_1}{1-\delta_1}\|\nabla f\|_2^2$$ as desired.
\end{proof}
\begin{lemma}[Coercivity on balls, \cite{BM}]\label{Cor}
  There exists $R=R(\delta,M(u),W)$ sufficiently large that
  $$\sup_{t\in\R}\|\nabla(\chi_Ru(t))\|_2^2\leq (1-\delta)\|\nabla W\|_2^2.$$
  In particular, there exists $\delta_3>0$ so that
   $$\|\nabla (\chi_Ru(t))\|_2^2-\|\chi_Ru(t)\|_6^6\geq \delta_3 \|\chi_Ru(t)\|_6^6.$$
\end{lemma}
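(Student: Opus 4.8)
The plan is to exploit the strict gradient bound \eqref{b}: since $\sup_t\|\nabla u(t)\|_2^2<(1-2\delta)\|\nabla W\|_2^2$, there is a margin of $\delta\|\nabla W\|_2^2$ available to absorb the error introduced by the spatial cutoff, and the whole point is that this error can be made arbitrarily small \emph{uniformly in $t$} by taking $R$ large. The key computation is the commutator identity obtained by expanding $\nabla(\chi_R u)=u\nabla\chi_R+\chi_R\nabla u$ and integrating the cross term by parts, namely
\begin{align*}
  \|\nabla(\chi_R u(t))\|_2^2=\int_{\R^3}\chi_R^2|\nabla u(t)|^2\,dx-\int_{\R^3}\chi_R\,\Delta\chi_R\,|u(t)|^2\,dx.
\end{align*}

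Since $0\le\chi_R\le1$, the first term is at most $\|\nabla u(t)\|_2^2$. For the second, I would use that $\Delta\chi_R$ is supported in the annulus $\{R/2\le|x|\le R\}$ with $\|\Delta\chi_R\|_{L^\infty}\lesssim R^{-2}$, so that it is controlled by $R^{-2}M(u)$ after invoking mass conservation. Crucially this bound does not depend on $t$, so passing to the supremum and combining with \eqref{b} gives $\sup_{t}\|\nabla(\chi_R u(t))\|_2^2\le(1-2\delta)\|\nabla W\|_2^2+CR^{-2}M(u)$; choosing $R=R(\delta,M(u),W)$ so large that $CR^{-2}M(u)\le\delta\|\nabla W\|_2^2$ proves the first assertion.

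The second assertion is then immediate: I would apply Coercivity II, Lemma \ref{QZ1}, pointwise in time to $f=\chi_R u(t)$ with $\delta_1=\delta$, since the bound just established is exactly its hypothesis $\|\nabla f\|_2^2\le(1-\delta)\|\nabla W\|_2^2$. This returns the stated inequality with the constant $\delta_3=\delta/(1-\delta)$ furnished by that lemma, again uniformly in $t$.

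I do not expect a genuine obstacle here: the calculation is routine and the radial hypothesis is not even needed (the cross term could alternatively be bounded by Cauchy--Schwarz as $R^{-1}\|u\|_2\|\nabla u\|_2$). The only thing requiring attention is the uniformity in time, which is guaranteed by the two conservation-type inputs already in hand: mass conservation, which makes the error $O(R^{-2})M(u)$ independent of $t$, and the time-uniform kinetic energy bound \eqref{b} supplied by Lemma \ref{gl}.
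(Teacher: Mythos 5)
Your proof is correct and is essentially the argument of the cited reference \cite{BM}, which the paper itself omits; the key identity $\|\nabla(\chi_R u)\|_2^2=\int\chi_R^2|\nabla u|^2\,dx-\int\chi_R\Delta\chi_R|u|^2\,dx$ is exactly the one the paper later invokes in the proof of Proposition \ref{po:31}, and your combination of it with mass conservation, the uniform bound \eqref{b}, and Lemma \ref{QZ1} is the intended route. No gaps.
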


\begin{lemma}[Morawetz identity]\label{lm:32} Let $a:\mathbb{R}^3\rightarrow \mathbb{R}$ be a smooth weight. Define
\begin{align}
M(t)=2\text{~Im~} \int \bar{u}\nabla u \cdot \nabla adx.
\end{align}
Then
\begin{align}
\frac{dM(t)}{dt}=-\int |u|^6\Delta a dx-\int |u|^2\Delta\Delta a dx+\int |u|^4\Delta a +4Re\int( \bar{u}_{i}a_{ij}u_{j})dx.
\end{align}
\end{lemma}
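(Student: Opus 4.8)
The plan is to differentiate $M(t)$ under the integral sign, substitute the evolution equation to remove the time derivative, and then integrate by parts, separating the outcome into a linear (free-Schr\"odinger) contribution and a nonlinear one. Write the momentum density $p_j := 2\,\mathrm{Im}(\bar u\,\partial_j u)$, so that $M(t)=\int \partial_j a\,p_j\,dx$ with summation over the repeated index $j$. Using $\partial_t u = i\Delta u - iF$ from \eqref{Ct} together with $\partial_t\bar u = -i\Delta\bar u + i\bar F$ and the identity $\mathrm{Im}(iz)=\mathrm{Re}(z)$, I would compute $\partial_t p_j = L_j + N_j$, where $L_j = 2\,\mathrm{Re}\big(\bar u\,\partial_j\Delta u - (\Delta\bar u)\,\partial_j u\big)$ is the linear part and $N_j = 2\,\mathrm{Re}\big(\bar F\,\partial_j u - \bar u\,\partial_j F\big)$ collects the nonlinear terms. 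Thus $\frac{dM}{dt}=\int \partial_j a\,(L_j+N_j)\,dx$, and it remains to treat the two integrals.

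For the linear part I would insert $L_j$ and integrate by parts twice. The first integration by parts moves a derivative so as to expose the Hessian $a_{ij}=\partial_i\partial_j a$, producing the term $4\,\mathrm{Re}\int \bar u_i\,a_{ij}\,u_j\,dx$; the leftover contribution, after a second integration by parts that transfers the Laplacian onto the weight, collapses to $-\int |u|^2\,\Delta\Delta a\,dx$. This is precisely the classical free momentum-flux computation, and it yields the two linear terms appearing in the statement.

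For the nonlinear part I would exploit gauge invariance: here $F=f(|u|^2)u$ with $f(\rho)=\rho-\rho^2$, and a direct calculation with $\rho=|u|^2$ gives $N_j = -2\rho f'(\rho)\,\partial_j\rho = \partial_j H(|u|^2)$, i.e.\ $N_j$ is a pure spatial gradient of a function of $|u|$, where $H$ is an elementary combination of $|u|^4$ and $|u|^6$. Integrating $\int \partial_j a\,N_j\,dx$ by parts then transfers the derivative onto $a$ and produces the potential-energy terms $\int |u|^4\,\Delta a\,dx$ and $-\int |u|^6\,\Delta a\,dx$ (with the constants dictated by $H$). Assembling the linear and nonlinear pieces gives the asserted identity for $\frac{dM}{dt}$.

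The formal computation above is valid for Schwartz (or sufficiently smooth and spatially decaying) solutions, for which every boundary term generated by the integrations by parts vanishes; to justify it for general $H^1$ solutions one approximates the data by smooth, decaying functions and passes to the limit using the local theory and the Strichartz bounds of Lemma \ref{Stri}. I expect the main bookkeeping obstacle to be the linear flux term: organizing the two integrations by parts so that the third derivatives of $u$ recombine cleanly into the Hessian term $4\,\mathrm{Re}\int \bar u_i a_{ij}u_j\,dx$ and the biharmonic term $-\int|u|^2\Delta\Delta a\,dx$, without leaving spurious pieces, is the one step that requires genuine care rather than routine manipulation.
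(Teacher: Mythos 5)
The paper does not actually prove this lemma (it is quoted as a standard identity and immediately applied), so your derivation is the natural one to supply, and its structure --- differentiate under the integral, split $\partial_t p_j$ into the linear flux $L_j$ and the gauge--invariant nonlinear part $N_j$, then integrate by parts --- is exactly the classical computation and is sound. The linear part works out as you describe: $\bar u\,\partial_j\Delta u-\Delta\bar u\,\partial_j u=\partial_k\big(\bar u\,\partial_j\partial_k u-\partial_k\bar u\,\partial_j u\big)$, and two integrations by parts yield $4\,\mathrm{Re}\int \bar u_i a_{ij}u_j\,dx-\int|u|^2\Delta\Delta a\,dx$ with no spurious pieces.

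The one genuine problem is the constant in the sextic term. Carrying out your own recipe with $f(\rho)=\rho-\rho^2$ gives $N_j=-2\rho f'(\rho)\,\partial_j\rho=\partial_j H(\rho)$ with $H'(\rho)=-2\rho+4\rho^2$, i.e.\ $H(\rho)=-\rho^2+\tfrac43\rho^3$, hence $\int\partial_j a\,N_j\,dx=-\int\Delta a\,H(|u|^2)\,dx=\int|u|^4\Delta a\,dx-\tfrac43\int|u|^6\Delta a\,dx$. The coefficient is $-\tfrac43$, not $-1$, so your computation does not produce ``the asserted identity'' but a corrected version of it. This is a typo in the lemma as printed rather than a flaw in your method: the paper's main term \eqref{eq:main} reads $8\int_{|x|\le R}\big(|\nabla u|^2-|u|^6+\tfrac34|u|^4\big)\,dx$, and with $\Delta a=6$, $a_{jk}=2\delta_{jk}$ on $|x|\le R$ this is exactly what the coefficient $-\tfrac43$ yields ($\tfrac43\cdot 6=8$), whereas the stated $-\int|u|^6\Delta a$ would give only $-6\int|u|^6$. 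You should therefore prove the identity with $-\tfrac43$ and flag the discrepancy explicitly; as written, the parenthetical ``with the constants dictated by $H$'' glosses over precisely the point where the stated lemma and your computation disagree.
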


Let $R\gg 1$ to be chosen later. We take $a(x)$ to be a radial function satisfying

\begin{eqnarray}
a(x)=
\begin{cases}
|x|^2;& |x|\leq R\\
3R|x|;& |x>2R,
\end{cases}
\end{eqnarray}
and when $R<|x|\leq 2R$, there holds
\begin{align*}
  \partial_{r}a\geq 0,\partial_{rr}a\geq 0\quad and \quad |\partial^{\alpha}a| \lesssim R|x|^{-|\alpha|+1}.
\end{align*}
Here $\partial_{r}$ denotes the radial derivative. Under these conditions, the matrix $(a_{jk})$ is non-negative.
It is easy to verify that
\begin{eqnarray*}
\begin{cases}
a_{jk}=2\delta_{jk},\quad \Delta a=6,\quad \Delta \Delta a=0,& |x|\leq R,\\
a_{jk}=\frac{3R}{|x|}[\delta_{jk}-\frac{x_{j}x_{k}}{|x|^2}],\quad \Delta a=\frac{6R}{|x|},\quad \Delta \Delta a=0,& |x|>2R.
\end{cases}
\end{eqnarray*}
Thus, we can divide $\frac{dM(t)}{dt}$ as follows:
\begin{align}\label{eq:main}
\frac{dM(t)}{dt}=&8\int_{|x|\leq R} |\nabla u|^2-|u|^6+\frac{3}{4}|u|^4dx\\\label{eq:er1}
&+\int_{|x|>2R}\frac{-6R}{|x|}|u|^6+\int \frac{6R}{|x|}|u|^4+\frac{12R}{|x|}|\not\nabla u|^2 dx\\\label{eq:er2}
&+\int_{R<|x|\leq 2R}4Re\bar{u}_{i}a_{ij}u_{j}+\mathcal{O}(\frac{R}{|x|}|
u|^4+\frac{R}{|x|}|u|^6+\frac{R}{|x|^3}|u|^2)dx,
\end{align}
where $\not\nabla$ denotes the angular derivation,
subscripts denote partial derivatives, and repeated indices are summed in this paper.

Note that by Cauchy-Schwarz, \eqref{zz0}, and the choice of $a(x)$, we have
\begin{align*}
\|M(t)\|_{L_{t}^\infty}\lesssim_E R.
\end{align*}

\begin{proposition}[Virial/Morawetz estimates]\label{po:31}
 Let $T>0$, if $R=R(\delta_0,M(u),W)$ is sufficiently large, then
 \begin{align}\label{in3}
\frac{1}{T}\int_{0}^{T}\int_{|x|\leq R}|u(t,x)|^6dxdt\lesssim_{u,\delta_0} \frac{R}{T}+\frac{1}{R^2}.
\end{align}
\end{proposition}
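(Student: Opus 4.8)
The plan is to integrate the Morawetz identity of Lemma \ref{lm:32} over the slab $[0,T]$ and extract a lower bound for the local energy-critical potential energy from its main term. Writing $M(T)-M(0)=\int_0^T \frac{dM}{dt}\,dt$ and invoking the a priori bound $\|M(t)\|_{L_t^\infty}\lesssim_E R$ recorded just above the statement, the left-hand side is controlled by $\lesssim_E R$. It therefore suffices to show that, after time integration, the main term \eqref{eq:main} dominates $\int_{|x|\le R}|u|^6$ up to an error of size $O(TR^{-2})$, while the boundary contributions \eqref{eq:er1} and \eqref{eq:er2} are negligible of the same order; here $R$ is taken at least as large as required by Lemma \ref{Cor}.

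The core step is the coercivity of \eqref{eq:main}. Since $\eqref{eq:main}=8\int_{|x|\le R}\big(|\nabla u|^2-|u|^6\big)\,dx+6\int_{|x|\le R}|u|^4\,dx$ and the cubic term is nonnegative, it is enough to bound $\int_{|x|\le R}(|\nabla u|^2-|u|^6)$ from below. I would pass from the sharp ball to the smooth cutoff $\chi_R u$: using $\chi_R^2\le \mathbf 1_{|x|\le R}$ together with an integration by parts on the cross term $\mathrm{Re}\int \chi_R\bar u\,\nabla\chi_R\cdot\nabla u$, one gets $\int_{|x|\le R}|\nabla u|^2\ge \|\nabla(\chi_R u)\|_2^2-O(R^{-2})$, the error being controlled by $R^{-2}\int_{R/2<|x|<R}|u|^2\lesssim R^{-2}M(u)$ through mass conservation (not the radial estimate, which is lossy for this term). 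Likewise $\int_{|x|\le R}|u|^6=\|\chi_R u\|_6^6+O(R^{-4})$. Feeding these into the coercivity on balls, Lemma \ref{Cor}, yields $\int_{|x|\le R}(|\nabla u|^2-|u|^6)\ge \delta_3\int_{|x|\le R}|u|^6-O(R^{-2})$, hence $\int_{|x|\le R}|u|^6\lesssim \eqref{eq:main}+O(R^{-2})$.

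For the boundary terms I would exploit their favorable signs and the radial decay. Writing $\int_0^T\eqref{eq:main}=[M(T)-M(0)]-\int_0^T\big(\eqref{eq:er1}+\eqref{eq:er2}\big)$, only the pieces of the wrong sign must be estimated. In \eqref{eq:er1} the exterior potential term $-\tfrac{6R}{|x|}|u|^6$ and the angular term $\tfrac{12R}{|x|}|\not\nabla u|^2$ have the right sign, so only $\int_{|x|>2R}\tfrac{R}{|x|}(|u|^4+|u|^6)$ survives; by the radial Sobolev embedding Lemma \ref{lem2.1}, $|u(x)|^2\lesssim |x|^{-2}\|u\|_{H^1}^2$, and combined with mass conservation these are $O(R^{-2})$. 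In \eqref{eq:er2} the quadratic form $4\,\mathrm{Re}\,\bar u_i a_{ij}u_j$ is nonnegative because $(a_{ij})\ge 0$, and on the annulus $R<|x|\le 2R$ one has $\tfrac{R}{|x|}\sim 1$ and $\tfrac{R}{|x|^3}\sim R^{-2}$, so the same radial-plus-mass argument bounds the $\mathcal O(\cdot)$ remainder by $O(R^{-2})$. Assembling the pieces gives $\int_0^T\int_{|x|\le R}|u|^6\lesssim R+TR^{-2}$, and dividing by $T$ yields \eqref{in3}.

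I expect the main obstacle to be the coercivity step: matching the sharp-ball main term to the smooth-cutoff inequality of Lemma \ref{Cor} while keeping every discrepancy at the level $O(R^{-2})$. The delicate point is the gradient cross term, where a naive application of the radial Sobolev bound only gives $O(R^{-1})$; the integration-by-parts reduction to a mass term is what restores the correct $R^{-2}$ gain, consistent with the right-hand side of \eqref{in3}.
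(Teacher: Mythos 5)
Your proposal is correct and follows essentially the same route as the paper: integrate the Morawetz identity against the bound $\|M(t)\|_{L_t^\infty}\lesssim_E R$, convert the main term \eqref{eq:main} to $\|\nabla(\chi_R u)\|_2^2-\|\chi_R u\|_6^6$ via the identity $\int\chi_R^2|\nabla u|^2=\int|\nabla(\chi_R u)|^2+\int\chi_R\Delta\chi_R|u|^2$ (your integration-by-parts on the cross term is exactly this, and the $O(R^{-2})$ gain via mass conservation is the same point the paper uses), apply the coercivity on balls of Lemma \ref{Cor}, and dispose of the exterior and annular errors in \eqref{eq:er1}--\eqref{eq:er2} by positivity of $(a_{ij})$ together with radial Sobolev embedding and mass conservation. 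The only blemish is a sign slip in your discussion of \eqref{eq:er1} (the term $-\tfrac{6R}{|x|}|u|^6$ is the unfavorable one, not a favorable one), but since you estimate it anyway this does not affect the argument.
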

\begin{proof}
Let $a(x)$ and M(t) be as in Lemma \ref{lm:32}. We consider (\ref{eq:main}) as the main term and (\ref{eq:er1}) and (\ref{eq:er2}) as error terms.
Using the identity
\begin{align*}
  \int\chi_{R}^2|\nabla u|^2dx=\int |\nabla(\chi_{R}u)|^2dx+\int \chi_{R}\Delta\chi_{R}|u|^2dx,
\end{align*}
we have
\begin{align*}
  \int\chi_{R}^2|\nabla u|^2dx-\int\chi_{R}^2|u|^6dx&=\int |\nabla(\chi_{R}u)|^2dx-\int|\chi_{R}u|^6dx\\
  &+\int \chi_{R}\Delta\chi_{R}|u|^2dx+\int(\chi_{R}^6-\chi_{R}^2)|u|^6dx.
\end{align*}
Thus, we deduce
\begin{align}
  (\ref{eq:main})&\geq
  \int |\nabla(\chi_{R}u)|^2dx-\int|\chi_{R}u|^6dx\\
  &+\int \chi_{R}\Delta\chi_{R}|u|^2dx+\int(\chi_{R}^6-1)|u|^6dx.
\end{align}
Using the Morawetz identity, we can deduce
\begin{align}
\int |\nabla(\chi_{R}u)|^2dx-\int|\chi_{R}u|^6dx\lesssim \frac{dM(t)}{dt} +\frac{1}{R^2}+\int_{|x|\geq R}[|u|^4+|u|^6]dx.
\end{align}

By Lemma \ref{Cor}, Lemma \ref{gl} and radial Sobolev embedding, we have
\begin{align*}
  \int_{|x|\geq R}[|u(t,x)|^4 + |u(t,x)|^6  ]dx \lesssim \frac{1}{R^2}+ \frac{1}{R^4},
\end{align*}
\begin{align*}
  \delta\int_{0}^{T}\int_{\R^3}|\chi_{R}  u |^6dxdt\lesssim_{E} R+\frac{T}{R^2},
\end{align*}
which ends the proof of Proposition \ref{po:31}.
\end{proof}

\emph{Proof of Proposition \ref{Pro}.}
Employing Proposition \ref{po:31} with $T$ sufficiently large and $R=T^\frac13$ we have
$$\frac1T\int_{0}^{T}\int_{|x|\leq T^\frac13}|u(t,x)|^6dx\lesssim T^{-2/3},$$
which suffices to give the desired result.

%
 \begin{appendix}
  \renewcommand{\thesection}{\Alph{section}.}
\section{Alternative characterization of $ \mathcal{K}^{+}$}

  \noindent

  In this appendix, we will show the alternative characterization of $\K^{\pm}$. Recall the definitions of  $\K^{\pm},\bar{\K}^{\pm}$:
  \begin{align*}
\K^+&=\{f \in H^1| f\  is\  radial,\ E(f)<E^c(W),\ K(f)\geq0 \},\\
\K^-&=\{f \in H^1| f\  is\  radial,\ E(f)<E^c(W),\ K(f)<0 \},\\
\bar{\K}^+&=\{f \in H^1|f\  is\  radial,\ E(f)<E^c(W),\ \|\nabla f\|_2^2\leq \|\nabla W\|_2^2\},\\
\bar{\K}^-&=\{f \in H^1|f\  is\  radial,\ E(f)<E^c(W),\ \|\nabla f\|_2^2 > \|\nabla W\|_2^2\},
\end{align*}
 where $K(f)~=~2(\|\nabla f\|_2^2-\|f\|_6^6)+\frac32\|f\|_4^4$. Let
  \begin{align*}
    H(f)=&\frac16(\|\nabla f\|_2^2+\|f\|_6^6),\\
    K^c(f)=&2\|\nabla f\|_2^2-2\|f\|_6^6.
  \end{align*}

  By the variational results of  Kenig-Merle  \cite{KM}, we know that the condition
  \begin{align*}
    \|\nabla f\|_2^2-\|f\|_6^6\geq 0,\ \ E^c(f)<E^c(W)
  \end{align*}
  is equivalent to the condition
  \begin{align*}
    \|\nabla f\|_2^2\leq \|\nabla W\|_2^2,\ \ E^c(f)<E^c(W).
  \end{align*}
This gives $\bar{\K}^+\subset \K^+$.

   Let $f_{1,-2}^\lambda=e^\lambda f(e^{2\lambda}x)$, we have  $f_{1,-2}^\lambda\in H^1$ and $f_{1,-2}^\lambda \neq 0$ for any $\lambda>0$. In addition, we have
   \begin{align*}
     K(f_{1,-2}^\lambda)&=2(\|\nabla f\|_2^2-\|f\|_6^6)+\frac32 e^{-3\lambda}\|f\|_4^4,\\
     H(f_{1,-2}^\lambda)&=H(f).
   \end{align*}
  By the facts in \cite{MXZ}
  \begin{align}
    &E^c(W)=\inf\{H(f)|f\in H^1,f\neq 0, K(f)\leq 0\},\\
    &E(f)=H(f)+\frac16 K(f),  \label{EHK}   \\
    &E(f_{1,-2}^\lambda)\leq E(f)<E^c(W), \text{ ~for~ } \lambda >0, \label{opp}
  \end{align}
 and
  $$K(f_{1,-2}^\lambda)=K^c(f)+\frac32 e^{-3\lambda}\|f\|_4^4,$$
we can deduce $\K^+\subset \bar{\K}^+$.
 In fact, if $K^c(f)< 0 $, there exists  $\lambda_0 \geq0$ such that $K(f_{1,-2})^{\lambda_0})=0 $ (since $K(f_{1,-2}^{0}\geq 0$). Then, by the identity \eqref{EHK},  we have
  $E(f_{1,-2}^{\lambda_0})=H(f)\geq E^c(W),$
which is opposite to \eqref{opp}.
Then, the result $\K^+= \bar{\K}^+$ follows, which implies $\K^-= \bar{\K}^-$.
  \end{appendix}

 \end{document}